\numberwithin{equation}{section}
\newcommand{\tu}{\ensuremath{\tau}}
\newcommand{\sg}{\ensuremath{\sigma}}
\newcommand{\mb}[1]{{\mbox{\boldmath{$#1$}}}}
\newcommand{\mc}[1]{{\mathcal{#1}}}
\newcommand{\got}[1]{{\mathfrak{#1}}}
\newcommand{\db}[1]{{\mathbb{#1}}}
\newcommand{\pa}{\partial}
\newcommand{\R}{\ensuremath{\mathbb{R}}}
\newcommand{\C}{\ensuremath{\mathbb{C}}}
\newcommand{\N}{\ensuremath{\mathbb{N}}}
\newtheorem{Remark}{Remark}
\newtheorem{Theorem}{Theorem}
\newtheorem{Proposition}{Proposition}
\newtheorem{lemma}{Lemma}
\theoremstyle{definition} 
\def\ii{\operatorname{i}}
 \newcommand{\SL}{\ensuremath{{\mbox{\rm{SL}}(2,\R)}}}
\newcommand{\Ka}{K\"ahler}
\newcommand{\mr}[1]{{\mathrm{#1}}}
\newcommand{\dd}{\operatorname{d}}
\newcommand{\grad}{\operatorname{grad}}
\newtheorem{deff}{Definition}
\renewcommand{\Re}{\operatorname{Re}}
\renewcommand{\Im}{\operatorname{Im}}
\begin{document}
\title{Hamiltonian systems on almost cosymplectic manifolds}
\author{Stefan  Berceanu}
\address[Stefan  Berceanu]{National
 Institute for Physics and Nuclear Engineering\\
         Department of Theoretical Physics\\
         PO BOX MG-6, Bucharest-Magurele, Romania}
         \email{Berceanu@theory.nipne.ro}
 
\begin{abstract}
 We determine the   Hamiltonian vector field on an odd dimensional
 manifold endowed with almost cosymplectic structure. This is a generalization
 of the corresponding Hamiltonian vector field on manifolds with  almost transitive
 contact structures,  which extends  the contact Hamiltonian
 systems. Applications are presented to the equations of motion on   a particular five-dimensional
manifold,  the extended Siegel-Jacobi upper-half plane  $\tilde{\mathcal{X}}^J_1$.
The   $\tilde{\mathcal{X}}^J_1$ manifold  is endowed with a
generalized transitive almost cosymplectic structure,
 an almost cosymplectic structure,  more general  than 
 transitive almost  contact structure and  cosymplectic
  structure. The equations of motion on  $\tilde{\mathcal{X}}^J_1$  extend the Riccati equations of motion
  on the four-dimensional  Siegel-Jacobi manifold  $\mathcal{X}^J_1$ attached to a linear
  Hamiltonian in the generators of the  real Jacobi
  group $G^J_1(\mathbb{R})$.
\end{abstract}

\subjclass{37J55; 53D15; 53D22}
\keywords{Jacobi group, invariant metric,  Siegel--Jacobi disk, Siegel--Jacobi upper
  half-plane,  extended
  Siegel--Jacobi upper half-plane, almost cosymplectic manifold,
  cosymplectic manifold, contact Hamiltonian system, equations of motion}
\maketitle
\today  
\tableofcontents
\section{Introduction}
The semi-direct product  $G^J_n(\R)$ of ${\rm
  Sp}(n,\R)$ with   the $(2n+1)$-dimensional Heisenberg group
$\rm{H}_n(\R)$ is called real Jacobi group of degree $n$, while the
isomorph  group $ \mr{H}_n \ltimes 
 \mr{ Sp}(n,\R)_{\C}$ is denoted  $G^J_n$. Both Jacobi  groups
$G^J_n(\R)$ and $G^J_n$
are intensively studied in Mathematics,  Mathematical Physics and
Theoretical Physics  \cite{jac1,SB15,SB19,BERC08B,gem,bs,ez},
\cite{yang}-\cite{Y10}.

The $G^J_n(\R)$-homogeneous space 
$\mc{X}^J_n:=\frac{G^J_n(\R)}{\rm{U}(n)\times \R}$
is called Siegel-Jacobi  upper half space and  $\mc{X}^J_n\approx \mc{X}_n\times
\R^{2n}$, where  $\mc{X}_n$ denotes the Siegel upper half space
\cite{SB15,SB19,yang,Y08}.   Similarly, the Siegel-Jacobi ball  is
defined as 
 $\mc{D}^J_n:=  \frac{G^J_n}{\rm{U}(n)\times \R}\approx \mc{D}_n\times\C^n$ \cite{sbj},  where  $\mc{D}_n$ denotes the Siegel (open)  ball  of degree
$n$ \cite{helg}.

Systems of coherent states (CS) based on the Siegel-Jacobi ball  have applications
in  quantum mechanics, geometric quantization,
dequantization, quantum optics, squeezed states, quantum
teleportation,  quantum tomography,  nuclear structure,  signal  processing, 
Vlasov kinetic equation, see references in   \cite{SB20,GAB,SB19}. 

In order to construct invariant metric on  homogeneous spaces
attached to the real Jacobi group of degree one, in \cite{SB19} we
 gave up   CS technique  inspired by Berezin \cite{ber74}-\cite{berezin}
and  we applied   moving frame  method   \cite{ev},  initiated by Cartan
\cite{cart4,cart5}. In \cite{SB19} we got invariant metrics on   $\mc{X}^J_1$ and 
 $\tilde{\mc{X}}^J_1$, while in \cite{SB20N} the results were 
 generalized     to extended Siegel-Jacobi upper half space
   $\tilde{\mc{X}}^J_n\approx \mc{X}^J_n\times\R$.

   It  raises the question: {\it how can we distinguish
 between the different invariant metrics obtained in} \cite{SB19} on  
$\mc{X}^J_1$ and $\tilde{\mc{X}}^J_1$?  
In the present paper we  give an answer to this problem, 
underlining 
 the differences between equations of motion on   $\mc{X}^J_1$ and 
 $\tilde{\mc{X}}^J_1$.

We developed  a technique to calculate equations of
classical and quantum
motion on a homogenous manifold $M=G/H$ attached to a linear 
Hamiltonian  in the generators of the group $G$, simultaneously with the
determination of the Berry phase \cite{sb6}-\cite{FC}.  The method works on the  so called
CS manifolds, i.e.  {\it  \Ka~ manifolds} for which the
generators of group $G$ admits  a realisation 
 as first order holomorphic differential operators with
polynomial coefficients \cite{sb6,SBAG01,last}.

In this paper we are interested in a similar problem: 
{\it find the equations of motion on  odd dimensional manifolds}
$M_{2n+1}$ with   {\it almost cosymplectic structure} in the
meaning of \cite{paul}. For an almost
cosymplectic manifold $(M_{2n+1},\theta, \Omega)$, where $\theta $ ($\Omega$) is a
one (respectively,  two)-form,   we determine the equations
of motion attached to a Hamiltonian function $H$. 

We determined the invariant metric to  the action of the Jacobi group
$G^J_1(\R)$ on $\tilde{\mc{X}}^J_1$ \cite{SB19}. 
We pointed  out that the extended Siegel--Jacobi
manifold does not admit a Sasaki structure in the parametrization used
in \cite{SB19}.
In the present paper we introduce 
on  $\tilde{\mc{X}}^J_1$ an   almost cosymplectic structure in the sense
of Libermann \cite{paul}, and moreover, we consider the particular
 case when 
$\dd \Omega =0$. 
 Such a manifold is  called {\it  generalized transitive almost
   cosymplectic  space} (GTACOS). The place of the GTACOS structure
in the set of   geometric structures with which a manifold of odd dimension can be endowed is underlined in
the Table in the Appendix: GTACOS  is an almost cosymplectic structure more general than
the transive almost contact structure and contact structure.

 We also endow  $\tilde{\mc{X}}^J_1$ with a contact structure in the sens of \cite{MLEO}.


The paper is laid out as follows. In Section    \ref{PR} are collected
several previous results extracted from \cite{SB20,SB19,SB21}
used in 
 this  paper.
Proposition \ref{BIGTH} presents the invariant metrics on five
$G^J_1(\R)$-homogeneous manifolds. Proposition \ref{PRFC} expresses
the \Ka~
 two-form and the invariant metric on $\mc{D}^J_1$ and  
$\mc{X}^J_1$ in several sets of variables. Proposition \ref{PROP5} recalls the invariant metric on
$\tilde{\mc{X}^J_1}$ in the S-variables $(x,y,p,q,\kappa)$ \cite{bs,ez}. In Section
\ref{RZ} are calculated the Hamiltonian vector field $X_H$  and $\grad H$ associated
with  the Hamiltonian function $H$ and the corresponding equations of
motion on almost cosymplectic manifolds in the variables
$(q^i,p_i,\kappa)$ , $i=1,\dots,n$, which  appear in the one-form $\theta$,  while
$\Omega$ is 
expressed in Darboux coordinates (Theorem \ref{main}). The new results on
almost cosymplectic manifolds  are
compared with the corresponding results for transitive almost contact manifolds 
\cite{Albert} (Remark
\ref{RR1}). Section \ref{ACHS} particularizes the results of Section
\ref{RZ} to the extended Siegel-Jacobi upper half-plane    $\tilde{\mc{X}}^J_1$    organized as
 GTACOS manifold.  Proposition \ref{P16} displays  the
equations of motion  on  $\tilde{\mc{X}}^J_1$  in the variables
$(x,y,p,q,\kappa)$ organized as a GTACOS manifold for a Hamiltonian function 
$H$ comparatively to the equations of motion on  $\mc{X}^J_1$ in the
variables  $(x,y,p,q)$. Proposition \ref{PPP}
underlines
 that  $\tilde{\mc{X}}^J_1$  does not admit an
 almost contact structure with the invariant metric $g_{\tilde{\mc{X}}^J_1} $
constructed in \cite{SB19}, but  $\tilde{\mc{X}}^J_1$  can be
equipped 
with an almost contact
metric structure  with a metric different of the metric 
$g_{\tilde{\mc{X}}^J_1}$.  We underline 
 that the parametrization introduced in
\cite{god} for Sasaki potential on Sasaki manifolds is not appropriate 
for $\tilde{\mc{X}}^J_1$ (Remark \ref{RR223}).   Proposition
\ref{PRR4} of Section
\ref{CHS} shows  the equations of motion on $\tilde{\mc{X}}^J_1$
organized as contact Hamiltonian system. Proposition \ref{PR6} in
Section \ref{43} 
presents the equations of motion on $\tilde{\mc{X}}^J_1$ organized as
GTACOS manifold generated by a Hamiltonian linear in
generators of the real Jacobi group comparatively with the
corresponding equations on ${\mc{X}}^J_1$. Finally,  the  Appendix
summarizes the definitions of different mathematical concepts used in
the paper. 

The new results of the paper are contained in  Lemma \ref{L1}, the  six  Remarks, 
Propositions  \ref{P16} -- \ref{PR6} and Theorem \ref{main}, which
presents  the equations of motion on odd
dimensional manifolds endowed with almost cosymplectic structure. The Remark \ref{R333} emphasizes
   the differences between the equations of motion
   on $\tilde {\mc {X}} ^ J_1 $ endowed with
almost cosymplectic structure
$ (\tilde{\mc {X}} ^ J_1, \theta, \omega) $ and contact structure
$ (\tilde {\mc {X}} ^ J_1,\eta_0) $, which in principle could be
experimentally highlighted.
Although   we are  not  able to determine effectively the
almost contact structure associated with the concrete realization of the
contact structure  $(\tilde{\mc{X}}^J_1,\eta_0)$, we report  the
partial results obtained at    Proposition \ref{PPP}  b) in order to emphasize the difficulty of the
problem.

\textbf{Notation}
We denote by $\mathbb{R}$, $\mathbb{C}$, $\mathbb{Z}$ and $\mathbb{N}$ 
 the field of real numbers, the field of complex numbers,
the ring of integers,   and the set of non-negative integers, respectively. We denote the imaginary unit
$\sqrt{-1}$ by~$\ii$, the real and imaginary parts of a complex
number $z\in\C$ by $\Re z$ and $\Im z$ respectively, and the complex 
conjugate of $z$ by $\bar{z}$.
 We denote by ${\dd }$ the differential. 
We use Einstein's summation convention, i.e.  repeated indices are
implicitly summed over.  The set of vector fields (1-forms) is denoted
by $\got{D}^1$ (respectively $\got{D}_1$). If we
denote with Roman  capital letteres the Lie  groups, then their
associated Lie algebras are denoted with the corresponding lower-case
letteres. The interior  product $i_X\omega$ (interior multiplication or contraction)
of the differential form $\omega$
with $X\in\got{D}^1$ is denoted $X\lrcorner \omega$. We
denote by $M(n,m,\db{F})$ the set of $n\times m$ matrices with elements
in the field $\db{F}$. If $X\in M(n,m,\db{F})$, then $X^t$ denotes the
transpose of $X$.

\section{Preparation: Invariant metrics on   $G^J_1(\R)$-homogenous spaces}\label{PR}

In \cite[(4.10), (5.15), (5.17)]{SB19} we have introduced 6 invariant one-forms
$\lambda_1,\dots,\lambda_6$  in the S-coordinates $(x,y,\theta,p,q,\kappa)$ \cite{bs} associated
 with the real Jacobi group $G^J_1(\R)$
\begin{subequations}\label{PARO}
  \begin{align}
    \lambda_1 & =\frac{\sqrt{\alpha}}{y}(\cos 2\theta\dd x+ \sin
                2\theta \dd y), \quad \alpha>0,\\
\lambda_2& =\frac{\sqrt{\alpha}}{y}(-\sin 2\theta \dd x+\cos 2\theta \dd y),\\
\lambda_3 & =\sqrt{\beta} (\frac{\dd x}{y} + 2\dd
            \theta), \label{3ll}\quad \beta>0,\\
            \lambda_4&= \sqrt{\gamma}[-y^{-\frac{1}{2}}\sin \theta \dd q +\big(y^{\frac{1}{2}}\cos\theta
 -xy^{-\frac{1}{2}}\sin\theta \big)\dd p],\label{LDP} \quad \gamma>0,\\
    \lambda_5& =\sqrt{\gamma}[y^{-\frac{1}{2}}\cos\theta \dd q
 +\big(y^{\frac{1}{2}}\sin\theta+xy^{-\frac{1}{2}}\cos\theta\big)\dd p],\label{LDQ}\\
    \lambda_6&=\sqrt{\delta}( \dd \kappa -p\dd q +q\dd p),\qquad \delta>0,
  \end{align}
\end{subequations}
and we have expressed the
invariant metric  on several  homogenous spaces associated with
$G^J_1(\R)$    \cite[Theorem 5.7]{SB19}:
\begin{Proposition}\label{BIGTH}
The four-parameter left-invariant metric on the real Jacobi group\\ $G^J_1(\R)$ in the S-coordinates $(x,y,\theta,$ $p,q,\kappa)$ is
\begin{equation}\label{MTRTOT}
  \begin{split}
{\rm d}s^2_{G^J_1(\R)} &=\sum_{i=1}^6\lambda_i^2 =\alpha\frac{{\rm d}
  x^2+{\rm d} y^2}{y^2} +\beta\left(\frac{{\rm d} x}{y}+2{\rm
    d}\theta\right)^2, \\
& + \frac{\gamma}{y}\big({\rm d}
q^2+ S {\rm d} p^2+2x{\rm d} p{\rm d}q\big)+\delta({\rm
  d} \kappa-p{\rm d} q+q{\rm d} p)^2, ~~S:=x^2+y^2.
\end{split}
\end{equation}

Depending on  the parameter values $\alpha, \beta, \gamma, \delta \ge 0$
in the metric \eqref{MTRTOT}, we have invariant metric on the
following $G^J_1\R)$-homogeneous manifolds$:$
\begin{enumerate}\itemsep=0pt
\item[$1)$] the Siegel upper half-plane $\mc{X}_1$ if $\beta,\gamma,\delta =0$,
\item[$2)$] the group $\SL$ if $\gamma,\delta=0$, $\alpha\beta\not= 0$, 
\item[$3)$] the Siegel--Jacobi half-plane $\mc{X}^J_1$ if $\beta, \delta= 0$, 
\item[$4)$] the extended Siegel--Jacobi half-plane $\tilde{\mc{X}}^J_1$ if $\beta=0$,
\item[$5)$] the Jacobi group $G^J_1$ if $\alpha\beta\gamma\delta\not=
  0$.
\end{enumerate}
\end{Proposition}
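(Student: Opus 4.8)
The plan is to treat the statement as two essentially separate tasks: (i) deriving the closed form \eqref{MTRTOT} for $\dd s^2_{G^J_1(\R)}=\sum_{i=1}^6\lambda_i^2$, and (ii) identifying the five homogeneous spaces by specializing $(\alpha,\beta,\gamma,\delta)$. Because the $\lambda_i$ are already established in \cite{SB19} to be left-invariant one-forms on $G^J_1(\R)$, every quadratic combination $\sum_i\lambda_i^2$ is automatically left-invariant; hence no invariance needs to be reproved here, and the content of the statement is purely the algebraic evaluation together with the recognition of each degenerate specialization as the invariant metric on the correct quotient.

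For task (i) I would group the six forms into the two rotated pairs $(\lambda_1,\lambda_2)$ and $(\lambda_4,\lambda_5)$ and the two already-squared forms $\lambda_3,\lambda_6$. In $\lambda_1^2+\lambda_2^2$ the coefficients of $(\dd x,\dd y)$ form a rotation by the angle $2\theta$, so by orthogonality the cross terms cancel and the sum collapses to $\frac{\alpha}{y^2}(\dd x^2+\dd y^2)$, with all $\theta$-dependence disappearing. A slightly longer expansion of $\lambda_4^2+\lambda_5^2$, in which the $\theta$-dependent pieces again reassemble through $\sin^2\theta+\cos^2\theta=1$ while the genuinely $\theta$-dependent cross terms cancel, produces the block $\frac{\gamma}{y}(\dd q^2+S\,\dd p^2+2x\,\dd p\,\dd q)$ with $S=x^2+y^2$. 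The terms $\lambda_3^2=\beta(\frac{\dd x}{y}+2\dd\theta)^2$ and $\lambda_6^2=\delta(\dd\kappa-p\,\dd q+q\,\dd p)^2$ are already perfect squares and are carried over verbatim, and summing the four contributions reproduces \eqref{MTRTOT}.

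For task (ii) the decisive feature of the explicit formula is which coordinate differentials survive each specialization. Setting $\delta=0$ deletes $\lambda_6$ and with it every occurrence of $\kappa$; setting $\beta=0$ deletes $\lambda_3$, the unique form carrying $\dd\theta$, and, crucially by the cancellation found in task (i), the surviving sum has no residual $\theta$ in its coefficients either. Thus $\beta=\gamma=\delta=0$ leaves $\alpha\frac{\dd x^2+\dd y^2}{y^2}$, the hyperbolic metric on $\mc{X}_1$; $\gamma=\delta=0$, $\alpha\beta\neq 0$, retains only the $(x,y,\theta)$-block, a left-invariant metric on $\SL$; $\beta=\delta=0$ retains the $(x,y,p,q)$-block, giving $\mc{X}^J_1$; $\beta=0$ retains $(x,y,p,q,\kappa)$, giving $\tilde{\mc{X}}^J_1$; and all four parameters nonzero gives a non-degenerate metric on the full six-dimensional $G^J_1$. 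The step I expect to require the most care is the descent in the $\beta=0$ rows: one must verify that the degenerate quadratic form upstairs actually projects to a genuine invariant metric downstairs, not merely a $\theta$-independent tensor. I would settle this by computing the kernel of the form, using that the coefficient matrices of the pairs $(\lambda_1,\lambda_2)$ and $(\lambda_4,\lambda_5)$ are invertible (the latter has determinant $-\gamma$), to show it equals exactly $\mathrm{span}(\pa_\theta)$ in the case $\beta=0$, $\alpha,\gamma,\delta>0$, and $\mathrm{span}(\pa_\theta,\pa_\kappa)$ in the case $\beta=\delta=0$, $\alpha,\gamma>0$; since these kernels coincide with the tangent spaces to the $\mathrm{U}(1)$- and $\mathrm{U}(1)\times\R$-fibres, the projected tensors are well defined and are the announced invariant metrics on $\tilde{\mc{X}}^J_1$ and $\mc{X}^J_1$.
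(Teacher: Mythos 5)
Your proposal is correct: the orthogonal-pair cancellations for $(\lambda_1,\lambda_2)$ and $(\lambda_4,\lambda_5)$ and the verbatim transfer of $\lambda_3^2,\lambda_6^2$ reproduce \eqref{MTRTOT} exactly, and the specializations are identified correctly. Note that the paper itself offers no proof here --- Proposition \ref{BIGTH} is recalled from \cite[Theorem~5.7]{SB19} --- so your direct expansion is precisely the computation being delegated to that reference; your extra care about the descent in the degenerate cases (kernel of the quadratic form equal to the fibre direction, combined with the $\theta$-independence of the surviving coefficients) is a sound and slightly more explicit justification than the statement strictly demands.
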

The action of Jacobi group   on several  homogeneous spaces
associated with it is presented in
 \cite[Lemma 1]{SB20}, \cite[Lemma 5.1]{SB19}.
    We have studied the geometry of the Siegel--Jacobi
upper half-plane $\mc{X}^J_1$ in  \cite{SB20,jac1,FC, SB19,SB21}.

The parts a), b), c) of Proposition \ref{PRFC} below are extracted  from \cite[Proposition 2.1 in]{SB19},  \cite[Proposition 1]{SB20}. Below $(w,z)\in  ( \mc{D}_1,\C)$,
$(v,u)\in (\mc{X}_1,\C)$. and 
the parameters $k$ and $\nu$ come from representation theory of the
Jacobi group: $k$ indexes the positive discrete series of ${\rm
  SU}(1,1)$, $2k\in\N$, while $\nu>0$ indexes the representations of
the Heisenberg group $\mr{H}_1$.  See also \cite{jac1},
\cite{SB15}. By  the two-form of 
Berndt-\Ka~  we mean 
the   two-parameter invariant \Ka~   two-form on  $\mc{X}^J_1$ determined 
in \cite{bern84,bern,cal3,cal}, see also \cite{SB20,SB19}. Part d)  is
taken from \cite[Proposition 3]{SB21}.     Part e) 
was proved in \cite[Proposition 5.4, see (5.21b)]{SB19}
and appears also in \cite[Proposition 7, see (4.18),  (4.23)]{SB21}.
\begin{Proposition} \label{PRFC}

  a) The \Ka~two-form on  $\mc{D}^J_1$, invariant to the action of
  $G^J_0=\rm{SU}(1,1)\ltimes\C$, is 
 \begin{equation}\label{kk1}
  -\ii \omega_{\mc{D}^J_1}
(w,z)\!=\!\frac{2k}{P^2}\dd w\wedge\dd
  \bar{w}+\nu \frac{\mc{A}\wedge\bar{\mc{A}}}{P},~P:=1-|w|^2,~\mc{A}=\mc{A}(w,z):=\dd
  z+\bar{\eta}\dd w.
  \end{equation}

We have the change of variables $FC: (w,z)\rightarrow (w,\eta)$
\begin{gather*}
{\rm FC}\colon \
 z=\eta-w\bar{\eta},\qquad {\rm FC}^{-1}\colon \
 \eta=\frac{z+\bar{z}w}{P},
\end{gather*}
and
\[
  {\rm FC}\colon \ \mc{A}(w,z)\rightarrow \dd \eta -w\dd
  \bar{\eta}.
\]

 b) Using the partial Cayley transform  $\Phi~:(w,z)\rightarrow (v,u)$ and its
inverse 
\begin{subequations}
\begin{align}
\Phi: w & =\frac{v-\ii}{v+\ii},~~z=2\ii
        \frac{u}{v+\ii},~~v,u\in\C,~\Im v>0,\label{210b}\\
\Phi^{-1}: v & =\ii \frac{1+w}{1-w},~~u=\frac{z}{1-w}, ~~w,z\in\C,~ |w|<1,\label{210a}
\end{align}
\end{subequations}
we obtain
\[
\mc{A}\left(\frac{v - \ii}{v+ \ii},\frac{2\ii
      u}{v + \ii}\right)=\frac{2\ii}{v+\ii}\mc{B}(v,u),
  \]
  where
  \begin{equation}\label{BFR2}
  \mc{B}(v,u) := {\rm d} u - r{\rm d} v, ~
  r:=\frac{u-\bar{u}}{v-\bar{v}}.
  \end{equation}
  The Berndt--\Ka 's two-form, invariant to the action of
$G^J(\R)_0=\rm{SL}(2,\R)\ltimes\C$, is 
\begin{equation}
- \ii \omega_{\mc{X}^J_1}(v,u) = -\frac{2k}{(\bar{v} - v)^2} \dd
v\wedge \dd\bar{v}+ \frac{2\nu}{\ii(\bar{v} - v)}\mc{B}\wedge\bar{\mc{B}}. \label{BFR}
\end{equation}
We have the   change of variables ${\rm FC}_1\colon \
 (v,u)\rightarrow (v,\eta)$
 \[
   {\rm FC}_1\colon \ 2\ii u=(v+\ii)\eta-(v-\ii)\bar{\eta}, \qquad
   {\rm FC}^{-1}_1 \colon  \eta=\frac{u\bar{v}-\bar{u}v}{\bar{v}-v} +
   \ii r.
   \]

 c) If we apply the change of coordinates $\mc{D}^J_1\ni
 (v,u)\rightarrow 
 (x,y,p,q)\in\mc{X}^J_1$
 \[
   \C\ni   u:=pv+q,~~p,q\in\R, \quad \C\ni v:=x+\ii y, ~x,y\in\R,~ y>0,
   \]
then
\[
  r=p,\quad
 \mc{B}(v,u)=\dd u -p \dd v,
\]
\begin{equation}\label{BUVpq1}
  \mc{B}(v,u)=\mc{B}(x,y,p,q):=F \dd t = v\dd p+\dd q=(x+\ii y)\dd p
  +\dd q, \quad  F:= \dot{p}v+\dot{q}.
\end{equation}

d) The second partial Cayley transform $\Phi_1: \mc{D}^J_1\rightarrow \mc{X}^J_1$
$$\Phi_1:=FC_1\circ\Phi:  (w,z)\rightarrow
(v=x+\ii y,\eta=q+\ii p)$$
 is  given  by  
\begin{subequations}\label{ULTRAN}
\begin{align}
 \Phi_1: & ~w=\frac{v-\ii}{v+\ii}, \quad  z=2\ii \frac{pv+q}{v+\ii},\label{ULTRAN1}\\
\Phi_1 ^{-1}: &~ v=\ii \frac{1+w}{1-w}, \quad \eta =
\frac{(1+\ii \bar{v})(z-\bar{z})+v(\bar{v}-\ii)(z+\bar{z})}{2\ii
  (\bar{v}-v)}=\frac{z+\bar{z}w}{P}.\label{ULTRAN2}
\end{align}
\end{subequations}
e) The two-parameter   balanced  metric  on the
Siegel--Jacobi upper half-plane $\mc{X}^J_1$, the particular case of
\eqref{MTRTOT}  corresponding to  item 
\emph{3)} in \emph{Proposition \ref{BIGTH}},   associated to the \Ka~
two-form \eqref{BFR}, \eqref{BUVpq1}, is 
\begin{equation}\label{METRS2}
  \dd s^2_{\mc{X}^J_1}(x,y,p,q)  \!=\!
\alpha\frac{\dd x^2\!+\!\dd   y^2}{y^2} +\frac{\gamma}{y}(S\dd p^2+\dd q^2+2x\dd
  p\dd q),
\end{equation}
where $S$ was defined in \eqref{MTRTOT} and
\begin{equation}\label{AK}
  \alpha:=k/2,\quad\gamma:=\nu.
\end{equation}
The metric matrix associated  with \eqref{METRS2} is
\[
g_{{\mc{X}}^J_1}\! = \!\left(\begin{array}{cccc}g_{xx} &0 &0 &0\\
0& g_{yy}& 0& 0 \\
0& 0& g_{pp} & g_{pq} \\0 & 0& g_{qp}& g_{qq}
 \end{array}\right),\!
 \begin{array}{cc}\qquad g_{xx}\!=\frac{\alpha}{y^2}, &
 \!g_{yy}\!=\!\frac{\alpha}{y^2};\\
g_{pq}\!=\!\gamma\frac{x}{y} , &
g_{pp} \!=\!\gamma\frac{S}{y},\quad g_{qq}\!=\!\frac{\gamma}{y}.
\end{array}
\]
\end{Proposition}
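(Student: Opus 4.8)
The statement has two independent parts, which I would establish separately. For the first part---that \eqref{METRS2} is the particular case of \eqref{MTRTOT}---nothing is needed beyond specialization: setting $\beta=0$ and $\delta=0$ in \eqref{MTRTOT} kills the $\beta(\cdots)^2$ and $\delta(\cdots)^2$ summands and leaves exactly $\alpha\frac{\dd x^2+\dd y^2}{y^2}+\frac{\gamma}{y}(\dd q^2+S\dd p^2+2x\dd p\dd q)$, which is \eqref{METRS2}; by item 3) of Proposition \ref{BIGTH} this is the $G^J_1(\R)$-invariant metric on $\mc{X}^J_1$. The substance is the second part: deriving this same metric from the Berndt--\Ka\ two-form \eqref{BFR}, \eqref{BUVpq1}, thereby fixing the identification \eqref{AK}.

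The plan for the second part is to recover the Riemannian metric from its fundamental two-form by the standard \Ka\ rule, in the normalization of \cite{SB19}: if one writes $-\ii\,\omega=\sum_{i,j}h_{i\bar j}\,\theta^i\wedge\bar\theta^j$ in a coframe $\{\theta^i\}$ of $(1,0)$-forms, then the metric is $\dd s^2=\sum_{i,j}h_{i\bar j}\,\theta^i\odot\bar\theta^j$, the symmetric product. The first step is to observe that $\{\dd v,\mc{B}\}$ is such a $(1,0)$-coframe: although the coefficient $r$ in $\mc{B}=\dd u-r\dd v$ is not holomorphic, $\mc{B}$ is still spanned by the holomorphic differentials $\dd v,\dd u$, hence is of type $(1,0)$; recognizing this is the one point that needs care, since it is what makes the read-off rule applicable. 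Reading the coefficients off \eqref{BFR} gives a diagonal Hermitian matrix with $h_{1\bar 1}=-\frac{2k}{(\bar v-v)^2}$ and $h_{2\bar 2}=\frac{2\nu}{\ii(\bar v-v)}$.

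I would then pass to the real S-coordinates via part c). Using $v=x+\ii y$, so that $\bar v-v=-2\ii y$ and $(\bar v-v)^2=-4y^2$, the coefficients simplify to the real positive quantities $h_{1\bar 1}=\frac{k}{2y^2}$ and $h_{2\bar 2}=\frac{\nu}{y}$, which by \eqref{AK} are $\frac{\alpha}{y^2}$ and $\frac{\gamma}{y}$. The symmetric products are computed directly: from $\dd v=\dd x+\ii\dd y$ one gets $\dd v\odot\dd\bar v=\dd x^2+\dd y^2$, and from $\mc{B}=(x+\ii y)\dd p+\dd q$ (eq.\ \eqref{BUVpq1}) one gets $\mc{B}\odot\bar{\mc{B}}=S\,\dd p^2+2x\,\dd p\,\dd q+\dd q^2$ with $S=x^2+y^2$, the imaginary parts cancelling by symmetry of $\odot$. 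Assembling, $\dd s^2=\frac{\alpha}{y^2}(\dd x^2+\dd y^2)+\frac{\gamma}{y}(S\dd p^2+2x\dd p\dd q+\dd q^2)$, which is \eqref{METRS2}; reading off coefficients ($2x\,\dd p\,\dd q=2g_{pq}\dd p\dd q$, and so on) yields the displayed metric matrix with $g_{xx}=g_{yy}=\frac{\alpha}{y^2}$, $g_{pp}=\gamma\frac{S}{y}$, $g_{qq}=\frac{\gamma}{y}$, and $g_{pq}=g_{qp}=\gamma\frac{x}{y}$.

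The main obstacle I anticipate is bookkeeping rather than conceptual: getting every normalization factor right so that the outcome is exactly \eqref{AK} ($\alpha=k/2$, $\gamma=\nu$) and not off by a factor of two. For this reason I would calibrate the \Ka-to-metric normalization against the pure Siegel factor (the $\beta=\gamma=\delta=0$ case, item 1) of Proposition \ref{BIGTH}), where the expected answer $\alpha\frac{\dd x^2+\dd y^2}{y^2}$ with $\alpha=k/2$ is already known; once the $\dd v\wedge\dd\bar v$ term reproduces it, the $\mc{B}\wedge\bar{\mc{B}}$ term follows with the same normalization. Everything else---the wedge and symmetric-product computations---is routine.
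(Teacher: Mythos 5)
Your treatment of part e) is correct and is the standard route: read the Hermitian coefficients off \eqref{BFR} via the rule \eqref{asm}, substitute $\bar v-v=-2\ii y$ to get $h_{1\bar1}=k/(2y^2)$ and $h_{2\bar2}=\nu/y$, and symmetrize $\dd v\otimes\dd\bar v$ and $\mc{B}\otimes\bar{\mc{B}}$ with $\mc{B}=(x+\ii y)\dd p+\dd q$ to land on \eqref{METRS2} with the identification \eqref{AK}; the specialization $\beta=\delta=0$ of \eqref{MTRTOT} is likewise exactly right. Note, though, that this paper does not prove Proposition \ref{PRFC} at all: parts a)--c) are imported from \cite{SB19,SB20}, part d) from \cite{SB21}, and part e) from \cite[Proposition~5.4]{SB19}, so there is no in-paper argument to match your derivation against; what you have reconstructed is the computation behind part e) as carried out in the cited reference.

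The genuine gap is that your proposal addresses only part e) (plus the trivial specialization), while the statement comprises five parts. You explicitly take \eqref{BFR} and \eqref{BUVpq1} as given inputs, but these are themselves claims of the proposition: part a) asserts the form \eqref{kk1} and its $G^J_0$-invariance together with the FC change of variables; part b) requires the computation $\mc{A}\bigl(\frac{v-\ii}{v+\ii},\frac{2\ii u}{v+\ii}\bigr)=\frac{2\ii}{v+\ii}\mc{B}(v,u)$ under the partial Cayley transform and the resulting factor $|v+\ii|^{-2}$ bookkeeping that converts \eqref{kk1} into \eqref{BFR}; part c) requires checking $r=p$ and $\dd u-p\,\dd v=v\,\dd p+\dd q$ when $u=pv+q$; and part d) asserts the composition formula \eqref{ULTRAN} for $\Phi_1=FC_1\circ\Phi$. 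None of these is hard, but none is addressed, and your calibration step (checking the $\dd v\wedge\dd\bar v$ term against the known Siegel factor) does not substitute for them. A complete proof of the statement would either carry out these substitutions explicitly or, as the paper does, cite them; as written, your argument establishes only the last item conditional on the first four.
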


We have also obtained invariant metric to the action of the Jacobi group $G^J_1(\R)$ on the
extended Siegel-Jacobi upper half-plane $\tilde{\mc{X}}^J_1$  
\cite[Proposition 5.6,  (5.25), (5.26)]{SB19}, see also
\cite[Theorem 1,  (5.1) (5.5)]{SB21}
\begin{Proposition}\label{PROP5}
The  three-parameter   metric of  the extended  Siegel-Jacobi upper
  half-plane  
  $\tilde{\mc{X}}^J_1$ expressed  in the  S-coordinates
  $(x,y,p,q,\kappa)$,  left-invariant with  respect  to  the action  of the Jacobi group
 $G^J_1(\R)$,  is given  by  item  \emph{4)} in
  \emph{Proposition \ref{BIGTH}} as 
  \begin{equation}\label{linvG}
    \begin{split}
 {\rm d} s^2_{\tilde{\mc{X}}^J_1}(x,y,p,q,\kappa) &
 \!=\!{\rm d} s^2_{\mc{X}^J_1}(x,y,p,q)+\lambda^2_6(p,q,\kappa)\\
 &\!=\!\frac{\alpha}{y^2}\big({\rm d} x^2+{\rm d}
 y^2\big)+\left(\frac{\gamma}{y}S+\delta q^2\right){\rm d} p^2+
 \left(\frac{\gamma}{y}+\delta p^2\right){\rm d} q^2 +\delta {\rm d} \kappa^2\\
& \!+ 2\left(\gamma\frac{x}{y}-\delta pq\right){\rm d} p{\rm d} q +2\delta (q{\rm d} p{\rm d}
 \kappa-p{\rm d} q {\rm d} \kappa).
    \end{split}
  \end{equation}
   The metric matrix associated  to the  metric \eqref{linvG} is
\begin{equation}\label{begGG}
g_{\tilde{\mc{X}}^J_1}\! = \!\left(\begin{array}{ccccc}g_{xx} &0 &0 &0&0\\
0& g_{yy}& 0& 0 & 0\\
0& 0& g_{pp} & g_{pq} & g_{p\kappa}\\0 & 0& g_{qp}& g_{qq}
 &g_{q\kappa}\\
0& 0& g_{\kappa p}& g_{\kappa q} & g_{\kappa\kappa}
 \end{array}\right),\!
 \begin{array}{cc}g_{xx}\!=\frac{\alpha}{y^2}, &
  \!g_{yy}\!=\!\frac{\alpha}{y^2},\\
 g_{pq}\!=\!\gamma\frac{x}{y}-\delta p q , &
~g_{p\kappa}\!=\!\delta q, g_{q\kappa}\!=\!-\delta p, \\
   g_{pp} \!=\!\gamma\frac{S}{y}+\delta q^2,&
 g_{qq}\!=\!\frac{\gamma}{y}+\delta p^2,   g_{\kappa\kappa}\!=\!\delta.
\end{array}
\end{equation}
The extended Siegel--Jacobi upper half-plane $\tilde{\mc{X}}^J_1$ does
not admit an almost contact structure $(\Phi,\xi,\eta)$ in the sense
of \emph{Definition \ref{D9}} with a contact form $\eta=\lambda_6$ and Reeb vector $\xi= \operatorname{Ker}(\eta)$.
 \end{Proposition}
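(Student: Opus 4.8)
The plan is to argue by contradiction: assuming such a structure $(\Phi,\xi,\eta)$ exists with $\eta=\lambda_6$ and compatible with the invariant metric $g_{\tilde{\mc{X}}^J_1}$ of \eqref{begGG}, I would first show that the Reeb field is completely determined and then that the forced shape of $\Phi$ clashes with $\dd\lambda_6$. The metric-compatibility clause of Definition \ref{D9} gives $\eta(X)=g_{\tilde{\mc{X}}^J_1}(\xi,X)$ together with $g_{\tilde{\mc{X}}^J_1}(\xi,\xi)=\eta(\xi)=1$, so $\xi$ must be the $g_{\tilde{\mc{X}}^J_1}$-dual of $\lambda_6$ normalised to unit length. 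Solving $g_{\tilde{\mc{X}}^J_1}\,\xi=\lambda_6$ against the matrix \eqref{begGG} is routine and yields $\xi=\delta^{-1/2}\pa_\kappa$; in particular $\xi$ spans the $g_{\tilde{\mc{X}}^J_1}$-orthogonal complement of $\operatorname{Ker}(\lambda_6)$, which is the sense in which $\xi=\operatorname{Ker}(\eta)$ is to be read.

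Next I would analyse $\Phi$. From $\Phi^2=-\mathbb{1}+\eta\otimes\xi$ together with $\Phi\xi=0$ and $\eta\circ\Phi=0$, the endomorphism $\Phi$ preserves the rank-four distribution $\mc{H}:=\operatorname{Ker}(\lambda_6)$ and restricts there to an almost complex structure $J:=\Phi|_{\mc{H}}$, $J^2=-\mathbb{1}$; the compatibility $g_{\tilde{\mc{X}}^J_1}(\Phi X,\Phi Y)=g_{\tilde{\mc{X}}^J_1}(X,Y)-\eta(X)\eta(Y)$ forces $J$ to be $g_{\tilde{\mc{X}}^J_1}$-orthogonal on $\mc{H}$. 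Consequently the fundamental two-form $\omega(X,Y):=g_{\tilde{\mc{X}}^J_1}(X,\Phi Y)$ is skew and \emph{nondegenerate} on $\mc{H}$, i.e. of rank four. A convenient frame for $\mc{H}$ is $\{\pa_x,\pa_y,\pa_p-q\pa_\kappa,\pa_q+p\pa_\kappa\}$, in which the induced metric is exactly the Siegel--Jacobi metric \eqref{METRS2}; this confirms that $\mc{H}$ is positive definite, so a bare orthogonal $J$ does exist, and the obstruction must come from the interaction with $\dd\eta$ encoded in Definition \ref{D9}.

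The decisive computation is $\dd\lambda_6$. From the last line of \eqref{PARO},
\[
\dd\lambda_6=-2\sqrt{\delta}\,\dd p\wedge\dd q,
\]
which has rank two. In the frame above, $\pa_x$ and $\pa_y$ lie simultaneously in $\mc{H}=\operatorname{Ker}(\lambda_6)$ and in $\operatorname{Ker}(\dd\lambda_6)$, while only the two-plane spanned by $\pa_p-q\pa_\kappa$ and $\pa_q+p\pa_\kappa$ is detected by $\dd\lambda_6$. Invoking the clause of Definition \ref{D9} that ties the fundamental two-form $\omega=g_{\tilde{\mc{X}}^J_1}(\cdot,\Phi\cdot)$ to $\dd\eta$ then equates a rank-four form with the rank-two form $\dd\lambda_6$ on $\mc{H}$, which is impossible; concretely, $\pa_x\,\lrcorner\,\dd\lambda_6=0$ whereas $\pa_x\,\lrcorner\,\omega\neq0$ by nondegeneracy of $\omega$ on $\mc{H}$. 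This contradiction rules out the structure. The main obstacle I anticipate is isolating precisely which clause of Definition \ref{D9} supplies the needed link between $\Phi$ and $\dd\eta$: since the plain almost contact metric conditions are already satisfiable on the positive-definite distribution $\mc{H}$, the non-existence genuinely hinges on the contact-type normalisation relating $\dd\lambda_6$ to the fundamental form, and the true content of the proposition is that the invariant metric $g_{\tilde{\mc{X}}^J_1}$ is incompatible with that normalisation.
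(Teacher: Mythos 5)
Your computations are all correct ($\xi=\delta^{-1/2}\pa_\kappa$ from $g_{\tilde{\mc{X}}^J_1}\xi=\lambda_6$, the frame $\{\pa_x,\pa_y,\pa_p-q\pa_\kappa,\pa_q+p\pa_\kappa\}$ for $\operatorname{Ker}\lambda_6$ reproducing \eqref{METRS2}, and $\dd\lambda_6=-2\sqrt{\delta}\,\dd p\wedge\dd q$), but the argument as assembled has the gap you yourself flag at the end. Definition \ref{D9} contains no metric and no clause relating $\dd\eta$ to $\Phi$: it demands only $\eta\lrcorner\xi=1$ and $\Phi^2X=-X+\eta(X)\xi$. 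The relations $\eta=g\xi$ and $\Phi^tg\Phi=g-\eta^t\otimes\eta$ that you use to pin down $\xi$ come from \eqref{680}, which asserts the existence of \emph{some} compatible metric, not that $g_{\tilde{\mc{X}}^J_1}$ is one; and the identification of the fundamental two-form $g(\cdot,\Phi\cdot)$ with $\dd\eta$ is the contact-metric condition \eqref{ECSA}, an additional structure. So the rank-four-versus-rank-two clash you derive does not follow from the hypotheses of the Proposition as written, and your closing paragraph concedes exactly this.

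The decisive point is already contained in your own formula for $\dd\lambda_6$: it gives $(\dd\lambda_6)^2=0$, hence $\lambda_6\wedge(\dd\lambda_6)^2=0$, so $\lambda_6$ violates \eqref{thtOM1} and is simply not a contact form on the five-dimensional manifold; the hypothesis ``a contact form $\eta=\lambda_6$'' is therefore unsatisfiable, which is precisely the justification the paper uses (recalled from \cite{SB19} at the opening of Section \ref{CHS}). Equivalently, $\operatorname{Ker}(\dd\lambda_6)$ is three-dimensional, so $\lambda_6$ singles out no Reeb vector. Your rank argument is the same obstruction viewed through \eqref{ECSA}; that is the route the paper does take in Proposition \ref{PPP}~a) for the genuine contact form $\eta_0$, where the extra compatibility is part of the hypotheses, but it is not needed (and not licensed by Definition \ref{D9}) here. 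Finally, the Proposition also asserts the formulas \eqref{linvG} and \eqref{begGG}; these are just the expansion of $\dd s^2_{\mc{X}^J_1}+\lambda_6^2$, implicitly confirmed by your frame computation, but your proposal never addresses them explicitly.
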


\section{Equations of motion on almost cosymplectic manifolds}\label{RZ}
In the present paper we adopt the terminology  of Libermann
\cite{paul} recalled
at {\bf ACOS} in Appendix. We
consider the almost cosymplectic manifold $(M_{2n+1},\theta, \Omega)$
such that condition \eqref{thtOM} is satisfied 
 for the real  one-form $\theta$ 
\begin{equation}\label{t1}
\theta=a_i\dd q^i+b_i \dd p_i+c\kappa, \quad a_i,b_i\in \R, ~ i=1,\dots,n, \quad c\neq 0,
\end{equation}
while for the  two-form $\Omega$ we use the Darboux 
parametrization given
in \eqref{tac2}.

We determine the  Hamiltonian vector field $X_H$  verifying  a
relation similar to \eqref{HAM} for the contact manifold $(M,\eta)$
for the 
 one-form $\theta$
\eqref{t1} and two-form  $\Omega$ \eqref{tac2}:
\begin{Theorem}\label{main}Let  $(M_{2n+1},\theta, \Omega)$
be an   almost cosymplectic manifold and let $H\in C^{\infty}(M)$ be a 
smooth Hamiltonian. 
  Then  the coefficients 
   that  define  the Hamiltonian  vector field $X_H$ attached the
   Hamiltonian  $H$
   \[
     X_H=A_i\frac{\pa}{\pa q^i}+B_i\frac{\pa }{p_i}+C\frac{\pa}{\pa \kappa},
   \]
are solution of the equation
\[
  \flat(X_H)=\dd H-(R\lrcorner H+H)\theta,
  \]
where the Reeb vector which is the solution of the equation \eqref{reeb}  is 
\begin{equation}\label{320}
R=\frac{1}{c}\frac{\pa }{\pa \kappa},
\end{equation}
and the isomorphism $\flat$ is defined in \eqref{bemol}.

It is  obtained  
\begin{equation}\label{ABC}
 A_i =\frac{\pa H}{\pa p^i}-b_iR(H),~
 B_i = -\frac{\pa H}{\pa q_i}+a_iR(H),~
 C  =\frac{1}{c}(-a_i\frac{\pa H}{\pa p^i}+b_i\frac{\pa H}{\pa q^i}-H),
\end{equation}
and then 
\begin{equation}\label{XHH}
X_H= (\frac{\pa H}{\pa p^i}-b_iR(H))\frac{\pa}{\pa q^i}+
 ( -\frac{\pa H}{\pa q_i}+a_iR(H))\frac{\pa }{\pa p_i}
  +(-a_i\frac{\pa H}{\pa p^i}+b_i\frac{\pa H}{\pa
    q^i}-H)R.
 \end{equation}
The vector field $\grad  H$, i.e.
the solution of the equation
\[
  (\grad  H)^{\flat}=\dd H
  \]
has the expression
\begin{equation}\label{gr}
  \grad H= (\frac{\pa H}{\pa p_i}-b_iR(H))\frac{\pa}{\pa q^i}+
  (-\frac{\pa H}{\pa p_i}+a_iR(H))\frac{\pa}{\pa p_i}
  + (-a_i\frac{\pa H}{\pa p_j}+ b_i\frac{\pa H}{\pa
   q^j}+R(H))R.
\end{equation}
The vector fields $X_H$ \eqref{XHH}  and $\grad H$ \eqref{gr} are related by the relation
\begin{equation}\label{XGR}
  X_H=\grad{H}-(H+R(H))R,\end{equation}
which is similar with \eqref{HAM} but with Reeb vector \eqref{320} instead of
\eqref{reeb3}.

The equations of motion on the  almost cosymplectic manifold
$(M,\theta,\Omega)$
are
\begin{equation}\label{ECABC}
  \frac{\dd q^i}{\dd t}- A_i=0,\quad
\frac{\dd p_i}{\dd t}- B_i=0,\quad
\frac{\dd \kappa}{\dd t}-C  =0,\quad i=1,\dots,n,
\end{equation}
where $(A_i,B_i,C)$ have the expressions given in \eqref{ABC}.

In particular, we find 
\begin{subequations}\label{CENR}
  \begin{align}
    \grad q^i & = -\frac{\pa}{\pa p_i}+b_i R,\\
    \grad p_i & =\frac{\pa}{\pa q^i}-a_iR,\\
    \grad \kappa &= \frac{1}{c}(-b_i\frac{\pa}{\pa
                   q^i}+a_i\frac{\pa}{\pa
                   p_i}+R),\\
    X_{q^i} &= -\frac{\pa}{\pa p_i}+(b_i-q^i)R,\\
    X_{p_i} &=\frac{\pa}{\pa q^i}-(a_i+p_i) R,\\
    X_{\kappa} & =  \frac{1}{c}(-b_i\frac{\pa}{\pa
                 q^i}+a_i\frac{\pa}{\pa p_i}-\kappa \frac{\pa}{\pa \kappa}).
                   \end{align}
  \end{subequations}
\end{Theorem}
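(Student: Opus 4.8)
The plan is to reduce the defining relation $\flat(X_H)=\dd H-(R(H)+H)\theta$ to a linear system in the unknowns $A_i,B_i,C$ and solve it by matching coefficients in the coframe $\dd q^i,\dd p_i,\dd\kappa$. First I would make the musical isomorphism $\flat$ of \eqref{bemol} explicit: for a vector field $X$ the natural cosymplectic pairing reads $\flat(X)=X\lrcorner\Omega+\theta(X)\theta$. Taking $X_H=A_i\pa/\pa q^i+B_i\pa/\pa p_i+C\pa/\pa\kappa$ and $\Omega$ in the Darboux form \eqref{tac2}, the contraction $X_H\lrcorner\Omega$ produces $-B_i\dd q^i+A_i\dd p_i$, while $\theta(X_H)=a_iA_i+b_iB_i+cC=:\mu$ enters only through the term $\mu\theta$. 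Collecting contributions gives $\flat(X_H)=(-B_i+\mu a_i)\dd q^i+(A_i+\mu b_i)\dd p_i+\mu c\,\dd\kappa$.

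Before solving I would record that $R=\tfrac1c\,\pa/\pa\kappa$ is forced by the Reeb equation \eqref{reeb}: since $\Omega$ carries no $\dd\kappa$ in the Darboux parametrization one has $R\lrcorner\Omega=0$, and $\theta(R)=1$ fixes the normalization $1/c$; equivalently $\flat(R)=\theta$. This also identifies $R(H)=\tfrac1c\,\pa H/\pa\kappa$, the very quantity occurring on the right-hand side.

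The key algebraic step is to solve the system in the correct order. Matching the $\dd\kappa$ coefficients gives $\mu c=\pa H/\pa\kappa-(R(H)+H)c$, hence $\mu=\theta(X_H)=-H$; this is the identity that makes the inhomogeneous term $(R(H)+H)\theta$ consistent. Substituting $\mu=-H$ into the $\dd p_i$ and $\dd q^i$ equations isolates $A_i$ and $B_i$, and the relation $\mu=a_iA_i+b_iB_i+cC$ then yields $C$, giving \eqref{ABC} and \eqref{XHH}. Repeating the computation with the homogeneous right-hand side $\dd H$ (so that now $\theta(\grad H)=R(H)$) produces \eqref{gr}; subtracting the two solutions, the $\pa/\pa q^i$ and $\pa/\pa p_i$ components cancel and only the $R$-component survives, with difference $-(H+R(H))R$, which is precisely \eqref{XGR}. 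The equations of motion \eqref{ECABC} are then just the statement that the trajectory is an integral curve of $X_H$, i.e. $\dot q^i=A_i$, $\dot p_i=B_i$, $\dot\kappa=C$, and the special cases \eqref{CENR} follow by substituting $H=q^i$, $H=p_i$, $H=\kappa$ into \eqref{gr} and \eqref{XHH}.

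I do not expect a genuine obstacle, since everything becomes linear once $\flat$ is written out; the only points requiring care are reading off $\flat$ and the Reeb condition correctly from \eqref{bemol} and \eqref{reeb}, and extracting $\theta(X_H)=-H$ from the $\dd\kappa$-component first, without which the remaining equations cannot be decoupled. The bookkeeping of the Darboux contraction and the sign in $X_H\lrcorner\Omega=-B_i\dd q^i+A_i\dd p_i$ is the most error-prone part of the computation.
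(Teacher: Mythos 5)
Your proposal is correct and follows essentially the same route as the paper: write out $\flat(X_H)=X_H\lrcorner\Omega+(X_H\lrcorner\theta)\theta$ in the coframe $\dd q^i,\dd p_i,\dd\kappa$, extract $\theta(X_H)=-H$ from the $\dd\kappa$-component, and then solve for $A_i,B_i,C$ by matching coefficients, with the same repetition for $\grad H$ and subtraction to get \eqref{XGR}. Your bookkeeping $\theta(X_H)=a_iA_i+b_iB_i+cC$ is in fact the correct one (the paper's intermediate formula for $X_H\lrcorner\theta$ omits the factor $c$ on $C$, though its final expressions \eqref{ABC} agree with the careful computation).
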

\begin{proof} The proof is elementary. We indicate the main steps.

  In
  order to apply \eqref{bemol} we calculate
  \[
    X_H\lrcorner \theta =A_ia_i+B_ib_i+C,\quad
    X_H\lrcorner \Omega=-B_i\dd q^i+A_i\dd p_i,
  \]
  and we find 
 \begin{equation}\label{XFLHH}
   X^{\flat}_H\!=\![-B_i\!+\!a_i(A_ja_j\!+\!B_jb_j\!+\!C)]\!\dd\!
  q^i\!+\![A_i\!+\!b_i(A_ja_j\!+\!B_jb_j\!+\!C)]\!\dd\!p_i\!+\! (A_ja_j\!+\!B_jb_j+\!C)c\dd\!
  \kappa.
  \end{equation}
   We calculate
   \begin{equation}\label{PARDH}
     \dd H-(R\lrcorner H+H)\theta= [\frac{\pa H}{\pa
  q^i}-a_i(R( H)+H)]\dd q^i
+ [\frac{\pa H}{\pa p_i}-b_i(R(H)+H)]
\dd p_i-cH\dd \kappa \end{equation}
and \eqref{ABC} follows from the identification  of \eqref{XFLHH} with
\eqref{PARDH}.

The calculation of $\grad H$ is done
similarly and from \eqref{XHH} and \eqref{gr} it follows \eqref{XGR}.
\end{proof}

Now we compare our results in Theorem \ref{main} with the results of
Albert \cite{Albert}.
\begin{Remark}\label{RR1}
  
The transitive almost contact structure considered by Albert
\cite{Albert} at {\bf{TACS}} in the{\emph{ Appendix}}  corresponds in our case in \eqref{t1}  to
\[
  a_i=\epsilon p_i,\quad b_i=0,\quad  c=1,\quad
  i=1,\dots,n,
\]
and the Reeb vector is 
\[
R(H)=\frac{\pa H}{\pa \kappa}.
\]
\eqref{XHH} becomes
  \[
    X_H
 = \frac{\pa H}{\pa p^i}\frac{\pa}{\pa q^i}+
 ( -\frac{\pa H}{\pa q_i}+\epsilon p^i\frac{\pa H}{\pa \kappa})\frac{\pa }{\pa p_i}-
  (\epsilon p^i\frac{\pa H}{\pa p^i}+ H)\frac{\pa}{\pa \kappa},
  \]
and $ X_H$  verifies the relations 
\begin{equation}\label{HHX}
  X_H\lrcorner \theta =-H, \quad X_H\lrcorner \Omega= \dd
  H-R(H)\theta. \end{equation}

Equation of $\grad f $ becomes
\[
\grad f=\frac{\pa f}{\pa p_i}\frac{\pa}{\pa q^i}+(\epsilon
p_i\frac{\pa f}{\pa \kappa}-\frac{\pa f}{\pa q^i})\frac{\pa }{\pa
  p_i}+(\frac{\pa f}{\pa \kappa}-\epsilon  p_i\frac{\pa f}{\pa
   p_i})\frac{\pa}{\pa \kappa}.
\]

Equations \eqref{CENR} become
\begin{subequations}\label{CENR1}
  \begin{align}
    \grad q^i & = -\frac{\pa}{\pa p_i},\\
    \grad p_i & =\frac{\pa}{\pa q^i}-\epsilon p_iR,\\
    \grad \kappa &= \epsilon p_i\frac{\pa}{\pa
                   p_i}+\frac{\pa}{\pa \kappa},\\
    X_{q^i} &= -\frac{\pa}{\pa p_i}- q^i\frac{\pa}{\pa \kappa}, \label{312d}\\
    X_{p_i} &=\frac{\pa}{\pa q_i}-(\epsilon+1)p_i\frac{\pa}{\pa \kappa},\label{312e}\\
    X_{\kappa} & =  +\epsilon p_i\frac{\pa}{\pa p_i}-\kappa\frac{\pa}{\pa \kappa}.\label{312f}
  \end{align}
  \end{subequations}

  Equations of the vector field $X_f$ at \cite[p 636]{Albert} are in
agreement with our \eqref{ABC},  except
the following  two differences:
\begin{itemize}
\item  the coefficient
  \[\epsilon (f-p_i\frac{\pa}{\pa t})\]
  of $\frac{\pa}{\pa t}$ should be replaced with
\[-\epsilon p^i\frac{\pa f}{\pa p_i}-f.\] This comes from the fact that:
\item  in formulas \cite[(3)]{Albert}  instead of \[X_f\lrcorner \theta
=\epsilon f,\] we should have \[X_f\lrcorner \theta =-f\] as in \eqref{HHX}. With these
correction,  Albert's formulas  for $X_f$ on \emph{  p 636} are particular cases
of our formula \eqref{XHH} and, if $\epsilon =-1$, corrected equation
\emph{(2.12)} in \cite{MLEO} is  find again. Similarly, if  instead of
\eqref{t1}
we consider \eqref{tace}, equations of motion \cite[(2.13), (2.14), (2.15)]{Albert}
in \cite{MLEO} are particular cases of ours equations \eqref{ECABC}.
\item  in formula \eqref{CENR1} Albert gives the different values
for $X_{q^i}$,  $X_{p_i}$ and $X_{\kappa}$ in  \eqref{312d},
\eqref{312e}, respectively \eqref{312f}
\[X_{q^i}=-\frac{\pa}{\pa p_i}+\epsilon q^i\frac{\pa}{\pa t}, \quad
  X_{p_i}=\frac{\pa}{\pa q^i}, \quad
  X_{t}=\epsilon(t\frac{\pa}{\pa t}+p_i\frac{\pa}{\pa p_i}).
\]
\end{itemize}
\end{Remark}

\section{Equations of motion on  $\tilde{\mc{X}}^J_1$}
\subsection{Generalized transitive almost cosymplectic Hamiltonian systems} \label{ACHS}

In the standard approach of CS,
the  $G$-invariant \Ka~ two-form  on a $2n$-dimensional homogenous 
manifold $M=G/H$ is obtained from the \Ka~ potential $f$ via the recipe
\begin{subequations}
  \begin{align*}-\ii\omega_M & =\pa\bar{\pa}f, ~f(z,\bar{z})=\log
  \mc{K}(z,\bar{z}), ~\mc{K}(z,\bar{z}):=(e_{{z}},e_{{z}}),\\
\omega_M(z,\bar{z}) & =\ii \sum_{\alpha,\beta}h_{\alpha\bar{\beta}}\dd
  z_{\alpha}\wedge \dd \bar{z}_{\beta},~
  h_{\alpha\bar{\beta}}=\frac{\pa^2 f}{\pa z_{\alpha}\pa
      \bar{z}_{\beta}},~
                      h_{\alpha\bar{\beta}}=\bar{h}_{\beta\bar{\alpha}},~\alpha,\beta=1,\dots,n,
  \end{align*}
  \end{subequations}
where   $\mc{K}(z,\bar{z})$
is  the scalar product of two  un-normalized Perelomov's  CS-vectors $e_{{z}}$ at
$z\in M$  \cite{sbj,SB15, perG}.

   In accord with  \cite[p 42 ]{ball}, \cite[p 28]{green},
   \cite[Appendix B]{SB19},     the Riemannian metric associated with the Hermitian  metric
    on the manifold $M$ in local coordinates is 
    \begin{equation}\label{asm}\dd
    s^2_{M}(z,\bar{z})=\sum_{\alpha,\beta=1}^nh_{\alpha\bar{\beta}}\dd
    z_{\alpha}\otimes\dd \bar{z}_{\beta}.\end{equation}

From the \Ka~ two-form \eqref{kk1} obtained  via CS in \cite{sbj} we
get with \eqref{asm} the balanced metric  on the Siegel-Jacobi
disk $\mc{D}^J_1$. If in $\dd s^2(w,z)_{\mc{D}^J_1} $ we apply the
second partial Cayley transform \eqref{ULTRAN},  then 
we obtain the invariant metric \eqref{METRS2} on $\mc{X}^J_1$.

We endow   $\tilde{\mc{X}}^J_1$  with a    generalized transitive
 almost  cosymplectic structure, i.e.  an almost cosymplectic
 structure $(M,\theta,\Omega)$ such that
 \[\dd \Omega =0.
 \]
\begin{lemma}\label{L1}
  If we introduce into the \Ka~two-form  $\omega_{\mc{D}^J_1} (w,z)$
  \eqref{kk1} the second partial Cayley transform \eqref{ULTRAN1}
$(w,z)\rightarrow (x,y,q,p), y>0$ we get the symplectic two-form
  \begin{equation}\label{TT123}
    \omega_{\mc{X}^J_1}(x,y,q,p) =\frac{k}{y^2}\dd x\wedge \dd y +2\nu \dd q\wedge \dd
    p,\quad y>0.
    \end{equation}

In the notation of \cite{SB19}, we  introduce on the extended
5-dimensional Siegel-Jacobi half-plane  $\tilde{\got{X}}^J_1$
parametrized in $(x,y,p,q,\kappa)$   the almost
cosymplectic structure  
$(\tilde{\mc{X}}^J_1,\theta,\omega)$, where $\theta = \lambda_6$ and
$\omega$ is \eqref{TT123}, i.e. 
\begin{subequations}\label{TT1}
  \begin{align}\theta & =\sqrt{\delta}(\dd \kappa
  -p \dd q+q\dd p),\quad \delta>0, \label{TT11}\\
    \omega & =\frac{k}{y^2}\dd x\wedge \dd y +2\nu \dd q\wedge \dd
             p,\quad y>0 \label{TT12}.
             \end{align}
           \end{subequations}
We have
           \begin{equation}\label{cond}
             \dd \omega=0,\quad \theta \wedge
             \omega^2=2\frac{k\nu\sqrt{\delta}}{y^2}\dd x\wedge\dd
             y\wedge\dd q\wedge\dd p\wedge \dd \kappa, 
             \end{equation}
             and   $(\tilde{\got{X}}^J_1,\theta,\omega)$ verifies the condition
             \eqref{thtOM} of an almost cosymplectic manifold.

             \eqref{TT11} corresponds in \eqref{t1} to the choice:
\begin{equation}\label{cond1}
  a_1=b_1=0,\quad a_2=-\frac{\sqrt{\delta}}{2\nu} p,\quad
  b_2=\sqrt{\delta}q ,\quad c=\sqrt{\delta}, \quad n=2,
  \end{equation}
  and \eqref{TT12} corresponds in \eqref{tac2} to the choice:
             \begin{equation}\label{cond2}
               q^1=kx,\quad p_1=-\frac{1}{y},\quad  q^2=2\nu
               q,\quad p_2=p,  \quad n=2.
             \end{equation}
             In Darboux coordinates we have a particular almost
             cosymplectic manifold
             $(\tilde{\got{X}}^J_1,\theta,\omega)$ verifying
             \eqref{thtOM} and in  addition the  condition 
             \[
               \dd \omega=0. 
             \]
  $(\tilde{\got{X}}^J_1,\theta,\omega)$         is a   generalized transitive
 almost  cosymplectic  manifold.

\end{lemma}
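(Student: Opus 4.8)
The plan is to read off the claim from the already-established Proposition \ref{PRFC} together with three routine exterior-algebra computations. First I would produce the two-form \eqref{TT123}; then verify $\dd\omega=0$ and the non-degeneracy $\theta\wedge\omega^2\neq0$; and finally exhibit the change of coordinates \eqref{cond2} that puts $\omega$ in Darboux form and, simultaneously, rewrites $\theta=\lambda_6$ in the shape \eqref{t1} with the coefficients \eqref{cond1}. The conclusion that $(\tilde{\got{X}}^J_1,\theta,\omega)$ is GTACOS is then immediate: it is almost cosymplectic (by \eqref{thtOM}) and satisfies the extra condition $\dd\omega=0$.

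For \eqref{TT123} I would not re-expand \eqref{kk1} from scratch but instead start from the expression \eqref{BFR} for $-\ii\,\omega_{\mc{X}^J_1}(v,u)$ obtained in Proposition \ref{PRFC} b) after the partial Cayley transform, and substitute $v=x+\ii y$ together with $\mc{B}=(x+\ii y)\dd p+\dd q$ from \eqref{BUVpq1}. With $\bar v-v=-2\ii y$ one has $(\bar v-v)^2=-4y^2$ and $\dd v\wedge\dd\bar v=-2\ii\,\dd x\wedge\dd y$, while $\mc{B}\wedge\bar{\mc{B}}=2\ii y\,\dd p\wedge\dd q$. Feeding these into \eqref{BFR} the factors of $\ii$ and $y$ cancel and, after dividing by $-\ii$, the real two-form $\frac{k}{y^2}\dd x\wedge\dd y+2\nu\,\dd q\wedge\dd p$ of \eqref{TT123} drops out. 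Composing $\Phi$ with the remaining coordinate change of Proposition \ref{PRFC} c) is exactly the second partial Cayley transform \eqref{ULTRAN1}, so this is the asserted substitution into \eqref{kk1}.

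The two structural identities \eqref{cond} are then mechanical. Since the coefficient $k/y^2$ of $\dd x\wedge\dd y$ depends only on $y$, its differential is proportional to $\dd y$ and wedges to zero against $\dd x\wedge\dd y$, while the $\dd q\wedge\dd p$ term has constant coefficient; hence $\dd\omega=0$. For the volume form, expanding $\omega\wedge\omega$ kills both pure squares and leaves only the cross term, so $\omega^2$ is a nonzero constant multiple of $\dd x\wedge\dd y\wedge\dd q\wedge\dd p$; wedging with $\theta=\sqrt{\delta}(\dd\kappa-p\,\dd q+q\,\dd p)$ annihilates the $\dd q$ and $\dd p$ contributions and retains only the $\sqrt\delta\,\dd\kappa$ piece, producing a nonzero multiple of $\dd x\wedge\dd y\wedge\dd q\wedge\dd p\wedge\dd\kappa$ whenever $y>0$ and $k,\nu,\delta>0$. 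This is precisely the non-degeneracy condition \eqref{thtOM}.

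It remains to match the parametrizations. Under \eqref{cond2}, i.e. $q^1=kx,\ p_1=-1/y,\ q^2=2\nu q,\ p_2=p$, one checks $\dd q^1\wedge\dd p_1=\frac{k}{y^2}\dd x\wedge\dd y$ and $\dd q^2\wedge\dd p_2=2\nu\,\dd q\wedge\dd p$, so that $\omega=\dd q^1\wedge\dd p_1+\dd q^2\wedge\dd p_2$ is literally the Darboux form \eqref{tac2} with $n=2$. Substituting $\dd q=\dd q^2/(2\nu)$, $\dd p=\dd p_2$ and $p=p_2,\ q=q^2/(2\nu)$ into $\theta$ reproduces the shape \eqref{t1} and reads off $a_1=b_1=0$, $a_2=-\frac{\sqrt\delta}{2\nu}p$, $b_2=\sqrt\delta\,q$, $c=\sqrt\delta$, i.e. \eqref{cond1}. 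The argument is elementary throughout; the only point needing care is this last identification, where the coefficients $a_2,b_2$ emerge position-dependent rather than constant, so \eqref{t1} must be understood in its general (not literally constant-coefficient) sense. With \eqref{cond} established, the GTACOS conclusion follows at once.
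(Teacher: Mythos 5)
Your proposal is correct and follows essentially the route the paper implicitly takes: Lemma \ref{L1} is left as a routine verification, and your derivation of \eqref{TT123} from \eqref{BFR} and \eqref{BUVpq1} of Proposition \ref{PRFC}, the checks of $\dd\omega=0$ and $\theta\wedge\omega^2\neq0$, and the identification of the Darboux coordinates \eqref{cond2} and the coefficients \eqref{cond1} (including your correct caveat that $a_2,b_2$ are position-dependent, so \eqref{t1} must be read in a generalized sense) are exactly what is needed. Incidentally, if you push your volume-form computation to the explicit constant you obtain $\theta\wedge\omega^2=\frac{4k\nu\sqrt{\delta}}{y^2}\,\dd x\wedge\dd y\wedge\dd q\wedge\dd p\wedge\dd \kappa$, so the factor $2$ in \eqref{cond} appears to be a factor-of-two slip in the paper; this does not affect the non-degeneracy claim or the GTACOS conclusion.
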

             
Introducing  \eqref{ABC} in Theorem \ref{main}, 
we get
\begin{Proposition}\label{P16}
The coefficients \eqref{ABC}  of  vector field $X_H$ \eqref{XHH} on
the generalized transitive almost cosymplectic manifold
$(\tilde{\mc{X}}^J_1,\theta,\omega)$ have the expressions
\begin{subequations}\label{ABCEU}
\begin{align}
A_1&=y^2\frac{\pa }{\pa y}, \quad\quad A_2=\frac{\pa H}{\pa p}\color{red}-q\frac{\pa
     H}{\pa \kappa}\color{black},\\
B_1 & =-\frac{1}{k}\frac{\pa H}{\pa x},\quad
      B_2=-\frac{1}{2\nu}(\frac{\pa H}{\pa q}\color{red}+p\frac{\pa H}{\pa \kappa})\color{black},\\
\color{red}C & \color{red}=\frac{1}{2\nu}(p\frac{\pa H}{\pa p}+q\frac{\pa H}{\pa q})-H.\color{black}
\end{align}
\end{subequations}
Equations of motion  \eqref{ECABC} on the 5-dimensional extended
Siegel-Jacobi half-plane  $\tilde{\mc{X}}^J_1$ are 
\begin{subequations}\label{ECXXX}
\begin{align}
\dot{x} &=\frac{y^2}{k}\frac{\pa H}{\pa y}, \quad\qquad\qquad \dot{y} = -\frac{y^2}{k}\frac{\pa H}{\pa x},\\
\dot{q} &= \frac{1}{2\nu}(\frac{\pa H}{\pa p}
          \color{red}-q\frac{\pa H}{\pa \kappa}\color{black}),\quad
\dot{p}  =-\frac{1}{2\nu}(\frac{\pa H}{\pa q}\color{red}+p\frac{\pa
          H}{\pa\kappa}\color{black}),\\
\color{red}\dot{\kappa}& \color{red}=\frac{1}{2\nu}(p\frac{\pa H}{\pa p} +q\frac{\pa
                         H}{\pa q})-H.
 \end{align}
\end{subequations}
\color{black}
\end{Proposition}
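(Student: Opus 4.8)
The plan is to read Proposition \ref{P16} off as a direct specialization of Theorem \ref{main}, since all the genuinely geometric work has already been carried out in Lemma \ref{L1}. That lemma identifies the pair $(\theta,\omega)$ of \eqref{TT11}--\eqref{TT12} with the abstract data of Theorem \ref{main}: the coefficients of $\theta$ in \eqref{t1} are fixed by \eqref{cond1}, namely $a_1=b_1=0$, $a_2=-\tfrac{\sqrt{\delta}}{2\nu}p$, $b_2=\sqrt{\delta}q$, $c=\sqrt{\delta}$ with $n=2$, while $\omega$ is put into Darboux form by the substitution \eqref{cond2}, $q^1=kx$, $p_1=-1/y$, $q^2=2\nu q$, $p_2=p$. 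In particular the Reeb vector \eqref{320} is $R=\tfrac{1}{\sqrt{\delta}}\,\partial/\partial\kappa$, so that $R(H)=\tfrac{1}{\sqrt{\delta}}\,\partial H/\partial\kappa$. Thus no new structural computation is needed; what remains is a substitution together with an application of the chain rule.

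First I would record the translation dictionary between the Darboux partial derivatives appearing in \eqref{ABC} and the derivatives in the physical variables $(x,y,p,q)$. From \eqref{cond2} one has $\partial/\partial q^1=\tfrac1k\,\partial/\partial x$, $\partial/\partial p_1=y^2\,\partial/\partial y$ (since $y=-1/p_1$ gives $dy/dp_1=y^2$), $\partial/\partial q^2=\tfrac1{2\nu}\,\partial/\partial q$, and $\partial/\partial p_2=\partial/\partial p$. Inserting these, together with the values of $a_i,b_i,c$ from \eqref{cond1}, into the master formulas \eqref{ABC} for $A_i,B_i,C$ and collecting terms yields the six coefficients of \eqref{ABCEU}. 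The terms carrying $\partial H/\partial\kappa$ are exactly the contributions of $-b_iR(H)$ and $a_iR(H)$ (these are the terms displayed in red, which are absent on the four-dimensional $\mc{X}^J_1$), while $C$ accumulates the homogeneity combination $p\,\partial H/\partial p+q\,\partial H/\partial q$ out of the $-a_i\,\partial H/\partial p_i+b_i\,\partial H/\partial q^i$ piece of \eqref{ABC}.

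Finally, the equations of motion \eqref{ECXXX} follow from \eqref{ECABC} by differentiating the relations \eqref{cond2} along the flow: $k\dot{x}=\dot{q}^1=A_1$, $\dot{y}/y^2=\dot p_1=B_1$, $2\nu\dot q=\dot q^2=A_2$, $\dot p=\dot p_2=B_2$, and $\dot\kappa=C$, which reproduce \eqref{ECXXX} line by line. The computation is entirely mechanical, so the proof has no real obstacle beyond careful bookkeeping; the two points that demand attention are the non-linear change $p_1=-1/y$, whose inversion produces the factor $y^2$ and the resulting asymmetry between the $\dot x$ and $\dot y$ equations, and the consistent handling of the constant $c=\sqrt{\delta}$, which enters $C$ through the prefactor $1/c$ in \eqref{ABC}. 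To match the displayed form of $C$ in \eqref{ABCEU}, in which the $-H$ term appears without a $1/\sqrt{\delta}$, one normalizes $\theta$ so that $\sqrt{\delta}=1$ (an admissible rescaling of the one-form); with this choice the specialization of \eqref{ABC} reproduces \eqref{ABCEU} verbatim.
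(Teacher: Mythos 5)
Your proposal is correct and follows essentially the same route as the paper, which proves Proposition \ref{P16} simply by substituting the identifications \eqref{cond1}, \eqref{cond2} of Lemma \ref{L1} into the general formulas \eqref{ABC}, \eqref{ECABC} of Theorem \ref{main}; your translation dictionary for the partial derivatives and the differentiation of \eqref{cond2} along the flow is exactly the intended (unwritten) bookkeeping. Your observation that the $-H$ term in $C$ should carry a factor $1/\sqrt{\delta}$ unless one normalizes $\sqrt{\delta}=1$ is a genuine and correct catch of a constant the paper drops silently.
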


\subsection{Contact Hamiltonian systems}\label{CHS}

We recall that in \cite{SB19} we noticed  that for $\eta =\lambda_6$
we have $\dd \eta^2 =0$ and if we want to introduce a contact
structure on $\tilde{\mc{X}}^J_1$, we have to choose 
$\eta\not= \lambda_6$.

Now we organize $\tilde{\mc{X}}^J_1$ as a contact Hamiltonian system in the
sense of \cite[\S~2.3]{MLEO}. Then we apply  Proposition \ref{PP19} to
determine the associated almost contact metric structure:
\begin{Proposition}\label{PPP}
  Let $\eta_0$ be such that $\dd \eta_0=\omega_{\mc{X}^J_1}$,  where $\omega_{\mc{X}^J_1}$ is
  given by 
  \eqref{TT123}. We chose
\begin{equation}\label{ETA00}
  \eta_o=\sqrt{\delta}\dd \kappa +\frac{k}{y}\dd x+\nu(-p\dd q+q\dd p).
\end{equation}
 $\eta_0\wedge \dd \eta_0^2$ has the same nonzero value as  the volume
 form given by the second equation \eqref{cond}, \eqref{thtOM1} is
 verified, and   $(\tilde{\mc{X}}^J_1, \eta_0)$ is a contact manifold.

 a) For  \eqref{ETA00} chosen  as $\eta_0$  and $ g_{\tilde{\mc{X}}^J_1}$
 given by \eqref{begGG}, there is no $\Phi$ 
 so  that $ (\tilde{\mc{X}}^J_1,
  \Phi,\xi,\eta_0,$        $g_{\tilde{\mc{X}}^J_1})$
  is an almost contact metric structure, where $\xi$ is given by \eqref{csi}, \eqref{IARXI}.

  b) The contact manifold $(\tilde{\mc{X}}^J_1,\eta_0)$ can be endowed with an  almost contact
  metric structure $ (\tilde{\mc{X}}^J_1,
  \Phi,\xi,\eta_0,g'_{\tilde{\mc{X}}^J_1})$. 
  The six components  $\Phi_{xx},
 \Phi_{xy} ,\Phi_{xq}, \Phi_{xp}, \Phi_{yx}, \Phi_{qq}$  of $\Phi$  in \eqref{phiind}
 can be expressed as function of the four remaining independent variables
 $\Phi_{yq},\Phi_{yp},\Phi_{qp},\Phi_{pq}$, while the rest of the
 components of $\Phi$ are obtained with \eqref{4colphi}, \eqref{REF}, \eqref{4reln}.
The fundamental quadratic form associated to the metric tensor  $g'$  \eqref{GVAL1} 
  must  be
 positive definite.
\end{Proposition}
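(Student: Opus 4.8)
The plan is to settle the contact condition first, then in part~a) to exhibit a single algebraic obstruction carried by $g_{\tilde{\mc{X}}^J_1}$, and in part~b) to remove it by constructing a new metric $g'$ built so that $\eta_0$ becomes the Reeb-dual. For the contact condition I would differentiate \eqref{ETA00}: the term $\frac{k}{y}\dd x$ gives $\frac{k}{y^2}\dd x\wedge\dd y$, the Heisenberg part $\nu(-p\dd q+q\dd p)$ gives $2\nu\,\dd q\wedge\dd p$, and $\sqrt\delta\,\dd\kappa$ is closed, so $\dd\eta_0=\omega_{\mc{X}^J_1}$ of \eqref{TT123}. Since $(\dd\eta_0)^2=\omega^2$ involves only $\dd x,\dd y,\dd q,\dd p$, only the $\sqrt\delta\,\dd\kappa$ summand of $\eta_0$ (equivalently of $\theta$ in \eqref{TT11}) survives the wedge, whence $\eta_0\wedge(\dd\eta_0)^2=\theta\wedge\omega^2$, the nonzero volume form of the second line of \eqref{cond}; as $k,\nu,\delta>0$ and $y>0$ it never vanishes, so \eqref{thtOM1} holds and $(\tilde{\mc{X}}^J_1,\eta_0)$ is contact. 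The Reeb field of \eqref{csi}, \eqref{IARXI} then follows from $\xi\lrcorner\dd\eta_0=0$, $\eta_0(\xi)=1$: because $\dd\eta_0$ is free of $\dd\kappa$, one gets $\xi=\tfrac{1}{\sqrt\delta}\tfrac{\pa}{\pa\kappa}$.

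\emph{Part a).} Any almost contact metric structure forces $g(\xi,\cdot)=\eta_0$, obtained from $g(\Phi X,\Phi\xi)=g(X,\xi)-\eta_0(X)\eta_0(\xi)$ together with $\Phi\xi=0$ (the left side vanishes). I would therefore just contract \eqref{begGG} with $\xi=\tfrac{1}{\sqrt\delta}\tfrac{\pa}{\pa\kappa}$: this reads off the $\kappa$-row and gives $g_{\tilde{\mc{X}}^J_1}(\xi,\cdot)=\sqrt\delta(\dd\kappa-p\dd q+q\dd p)=\theta$, whereas $\eta_0$ in \eqref{ETA00} in addition carries the term $\frac{k}{y}\dd x$. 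The $\dd x$-coefficients already disagree ($0$ versus $k/y\neq0$), so $g_{\tilde{\mc{X}}^J_1}(\xi,\cdot)\neq\eta_0$ and no compatible $\Phi$ can exist.

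\emph{Part b).} Here I would define the candidate metric, following the construction behind Proposition \ref{PP19}, by \eqref{GVAL1}, i.e. $g'(X,Y)=\dd\eta_0(X,\Phi Y)+\eta_0(X)\eta_0(Y)$; then $g'(\xi,\cdot)=\eta_0$ and $g'(\Phi X,\Phi Y)=g'(X,Y)-\eta_0(X)\eta_0(Y)$ hold automatically once $\Phi^2=-\operatorname{Id}+\eta_0\otimes\xi$, $\Phi\xi=0$, $\eta_0\circ\Phi=0$ are imposed and the form \eqref{GVAL1} is symmetric. Writing $\Phi=(\Phi_{ab})$ in the frame $(x,y,p,q,\kappa)$ as in \eqref{phiind}, the condition $\Phi\xi=0$ annihilates the $\kappa$-column (\eqref{4colphi}) and $\eta_0\circ\Phi=0$ fixes the $\kappa$-row. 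The structural identity then splits: on the $(x,y,p,q)$ block it is $J^2=-\operatorname{Id}_4$, while its mixed $\kappa$-components furnish the extra quadratic relations $\sum_c\Phi^\kappa_c\Phi^c_b=\tfrac{1}{\sqrt\delta}(\eta_0)_b$, which together with the reflexivity relation constitute \eqref{REF}, \eqref{4reln}. Solving this system lets one eliminate $\Phi_{xx},\Phi_{xy},\Phi_{xq},\Phi_{xp},\Phi_{yx},\Phi_{qq}$ in favour of the four free entries $\Phi_{yq},\Phi_{yp},\Phi_{qp},\Phi_{pq}$, the remaining components being recovered from \eqref{4colphi}, \eqref{REF}, \eqref{4reln}. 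Finally I would substitute the resulting $\Phi$ into \eqref{GVAL1} and apply Sylvester's criterion to the leading principal minors, cutting out the open set of parameter values for which $g'$ is positive definite.

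\emph{Main obstacle.} The delicate part is Part~b): carrying out the nonlinear elimination forced by $J^2=-\operatorname{Id}_4$ together with the mixed relations \eqref{4reln} to reach a clean four-parameter family, and then checking that the positivity locus of \eqref{GVAL1} is nonempty — without which the asserted almost contact metric structure would not actually exist. By contrast, the contact condition and Part~a) are short direct computations that hinge only on the disagreement in the $\dd x$-coefficient between $\theta$ and $\eta_0$.
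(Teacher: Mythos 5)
Your preliminary computation ($\dd\eta_0=\omega_{\mc{X}^J_1}$, the survival of only the $\sqrt{\delta}\,\dd\kappa$ term in $\eta_0\wedge(\dd\eta_0)^2$, the Reeb field $\xi=\frac{1}{\sqrt{\delta}}\pa_\kappa$) matches the paper, and your part a) is correct but takes a genuinely different route. The paper first solves the contact-metric compatibility \eqref{gphi} exactly, i.e.\ $\Phi=g^{-1}\hat{\Phi}$ with $\hat{\Phi}$ the matrix \eqref{SS3} of $\dd\eta_0$, obtains the block form \eqref{DR1}, and only then kills the structure on a single component of $\eta_0\Phi=0$, namely \eqref{NOTb}, $k\tu/(yg_{xx})\neq 0$. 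You bypass $\Phi$ altogether: every almost contact metric structure satisfies $\eta=g\xi$ (part of \eqref{680}, which you rederive correctly from $\Phi\xi=0$ and the compatibility of $g$), and contracting \eqref{begGG} with the fixed $\xi=\frac{1}{\sqrt{\delta}}\pa_\kappa$ gives $g_{\tilde{\mc{X}}^J_1}\xi=\lambda_6=\theta$, whose $\dd x$-coefficient is $0$ while that of $\eta_0$ is $k/y\neq 0$. This is shorter and in fact slightly stronger, since it refutes the bare almost contact metric axioms for this $(g,\xi,\eta_0)$ without invoking the extra condition \eqref{ECSA}/\eqref{gphi} that the paper's argument presupposes. (One imprecision: $\theta$ differs from $\eta_0$ also in the Heisenberg part, $\sqrt{\delta}$ versus $\nu$, not only by the term $\frac{k}{y}\dd x$; but the $\dd x$ mismatch alone carries your contradiction.)

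Part b), by contrast, has a genuine gap. What you write is precisely the paper's plan (impose \eqref{EQQ}, define $g'$ via \eqref{EC7}, extract \eqref{4colphi} and \eqref{4reln}, get \eqref{REF} from the symmetry of \eqref{GVAL}, reduce $\Phi^2=-\mathbb{1}+\xi\otimes\eta_0$ to conditions on the $4\times 4$ block), but the proposition's actual content is the \emph{outcome} of the nonlinear elimination, which you assert rather than perform. Carrying it out is not routine: the 25 scalar equations \eqref{EC2} must first be shown to reduce to the six independent ones \eqref{pHIp}; the eliminations require the nondegeneracy hypotheses $\Phi_{xq}\Phi_{xp}\neq 0$ and $\Phi_{xy}\neq 0$ in order to divide; and they force the unobvious identity $\Phi_{xp}=\Phi_{xq}$ of \eqref{QU}, without which the announced partition --- the six components $\Phi_{xx},\Phi_{xy},\Phi_{xq},\Phi_{xp},\Phi_{yx},\Phi_{qq}$ solved in terms of the four free ones $\Phi_{yq},\Phi_{yp},\Phi_{qp},\Phi_{pq}$ --- cannot be established. (Also, your ``mixed $\kappa$-components'' of $\Phi^2$ are not additional constraints: given $\eta_0\Phi=0$ and $J^2=-\mathbb{1}_4$ on the block they hold automatically, which is exactly why the paper's count stops at six equations.) On the last point you and the paper are in the same position: positive definiteness of \eqref{GVAL1} is imposed, not verified on a nonempty locus --- consistent with the paper's own admission in the Introduction that the almost contact structure attached to $(\tilde{\mc{X}}^J_1,\eta_0)$ is not effectively determined --- so flagging it as the main obstacle is fair, but as it stands your part b) establishes a strategy, not the stated reduction.
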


\begin{proof}
  a) We use \eqref{gphi} for $\hat{\Phi}$ corresponding to $\eta_0$
  and \eqref{begGG}  written as 
    \begin{equation}\label{SS1}
g_{\tilde{\mc{X}}^J_1}=\left(\begin{array}{cc} A &0 \\0
                                                 &B\end{array}\right),~~
                                             A=
                                             \left(\begin{array}{cc}
                                                     g_{xx} & 0\\0 &
                                                                     g_{yy}\end{array}
                                                                 \right),
                                                                 ~B
                                                                 =\left(\begin{array}{ccc}
                                                                          g_{qq}
                                                                          &g_{qp}&
                                                                                   g_{q\kappa}\\
                                                                        g_{pq}&
                                                                                g_{pp}&
                                                                                        g_{p\kappa}\\
                                                                        g_{\kappa
                                                                          q}&g_{\kappa
                                                                              p}&
                                                                                  g_{\kappa \kappa}\end{array}\right).
    \end{equation}
We write down $\Phi$ as
\begin{equation}\label{SS2}
  \Phi\!=\! \left(\begin{array}{cc}\Phi_1  &\Phi_2\\ \Phi_3&
                                                         \Phi_4\end{array}\right),
                                                     \text{~~~}\Phi_1\in
                                                     M(2,2,\R),~\Phi_4\in
                                                     M(3,3,\R),~\Phi_2,~\Phi_3^t\in M(2,3,\R).
      \end{equation}
 With \eqref{SS1} and \eqref{SS2} we get
  \[
  g_{\tilde{\mc{X}}^J_1}\Phi=\left(\begin{array}{cc}
  A\Phi_1& A\Phi_2\\B\Phi_3&B\Phi_4\end{array}\right).
 \]
We write    down $\hat{\Phi}$  as
      \begin{equation}\label{SS3}
        \hat{\Phi}= \left(\begin{array}{cc}\hat{\Phi}_1  & 0 \\ 0 &
   \hat{\Phi}_4\end{array}\right),
 ~\hat{\Phi}_1 =\left(\begin{array}{cc} 0& \tu\\-\tu
 &0\end{array}\right), ~\hat{\Phi}_4=\left(\begin{array}{ccc} 0&\sg&
                                                                      0\\
                                             -\sg & 0 &0\\
                                           0& 0& 0\end{array}\right),
                                       ~\tu=\frac{k}{y^2},~\sg=2 \nu .
                                     \end{equation}
  With \eqref{gphi} we find the relations 
  \begin{equation}\label{DR1}
    \Phi_1 =\left(\begin{array}{cc} 0 &\frac{\tu}{g_{xx}}\\
 -\frac{\tu}{g_{yy}} &0\end{array}\right), ~\Phi_2=0,~\Phi_3=0,
~\Phi_4=\left(\begin{array}{ccc} g_{qq}& g_{qp}& 0\\
                g_{pq}& g_{pp}& 0\\
                g_{\kappa q}& g_{\kappa p} & 0\end{array}\right).\\
                                           \end{equation}
 With \eqref{DR1}, the first
 condition \eqref{679}  implies
 \[
   \xi^t=(0,0,0,0,\xi_{\kappa}),
 \]
while the  second  condition \eqref{679} gives
  \begin{equation}\label{csi}
  \xi=\frac{1}{\sqrt{\delta}}\frac{\pa}{\pa \kappa}.
\end{equation}
The second condition \eqref{679} implies
\begin{subequations}\label{NOT}
\begin{align}
0 = &
     \frac{k}{y}\Phi^x_x-\nu\Phi^q_x+\nu\Phi^p_x+\sqrt{\delta}\Phi^{\kappa}_x,\\
  0 = & \frac{k\tu}{yg_{xx}},\label{NOTb}\\
  0 = & \dots   .
\end{align}
\end{subequations}
But \eqref{NOTb} can't  be satisfied 
and a) is proved.

b)  With \eqref{ACM}, \eqref{679}, \eqref{680}, for  that the
contact manifold $(M_{2n+1},\eta)$ to have an almost contact
metric structure $(M,\Phi,\xi,\eta,g)$, the following equations must
be satisfied 
\begin{subequations}\label{EQQ} 
  \begin{align}
     & \text{rank~} \Phi= 2n, \label{EC8}\\
    &\eta\lrcorner \xi=1, \quad \xi^t,\eta\in M(1,n,\R)\label{EC1},\\
     &  \Phi \xi=0,\label{EC3}\\
    & \eta \Phi =0,\label{EC4} \\
    & \Phi^2  =-\mb{1}_n +\xi\otimes\eta ,\label{EC2}\\
    &  \eta=g \xi,\label{EC6}\\
    & g= \eta\otimes \eta -\hat{\Phi}\Phi,\label{EC7}\\
    & \hat{\Phi}\xi=0\label{EC5}.
   \end{align}
 \end{subequations}
 We recall that $\hat{\Phi}$ is defined in \eqref{gphi} and is connected with  $\dd
 \eta$ as in \eqref{ECSA2}.
 Equations \eqref{EC8}-\eqref{EC7} appear in \cite{SAS60} (and also
 in
 \cite{sh}),  where it is underlined that the first 5 relations  are not independent. 
   Equation \eqref{EC5} is a consequence of \eqref{EC4},
   \eqref{gphi}. See also Theorem 3.1 in \cite{sas}.

   Below we denote the components  $\Phi^i_j$ of the (1,1)-tensor
   field $\Phi$ by $\Phi_{ij}$, i.e.
   \begin{equation}\label{below}
     \Phi=(\Phi)_{ij},\quad i,j=x,y,q,p,\kappa.
     \end{equation}

   1) With $\eta_0$ given by \eqref{ETA00} and \eqref{EC1}, we get for
   $\xi=(\xi_x,\xi_y,\xi_q,\xi_p,\xi_{\kappa})$ 
\[\frac{k}{y}\xi_x-\nu p\xi_q+\nu q\xi_p+\sqrt{\delta}\xi_k=1.\]
   With \eqref{EC5}  for $\hat{\Phi}$ given by \eqref{SS3}  we get
   again for $\xi$ the form \eqref{csi}
\begin{equation}\label{IARXI}
  \xi^t=(0,0,0,0,\frac{1}{\sqrt{\delta}}).
\end{equation}

2) With \eqref{EC3} and \eqref{IARXI} we get the relations 
\begin{equation}\label{4colphi}
  \Phi_{z\kappa}=0, \quad  z=x,y,q,p,\kappa.
\end{equation}

3) With \eqref{EC4}, we get
\begin{equation}\label{4rel}
    \Phi_{\kappa z} =-\frac{1}{\sqrt{\delta}}(\frac{k}{y}\Phi_{xz}-\nu p\Phi_{q
      z}+\nu q \Phi_{pz}),\quad z=x,y,q,p.
        \end{equation}

        4) With the values \eqref{ETA00} for $\eta$ and \eqref{IARXI} for $\xi$
        introduced in \eqref{EC6}, we get
        \begin{equation}\label{422}
          (g_{ x\kappa}, g_{y \kappa},      g_{q \kappa},  g_{p
            \kappa},  g_{\kappa \kappa})=(\frac{k\sqrt{\delta}}{y},0,-\nu\sqrt{\delta}p,\nu\sqrt{\delta}q,\delta).       
          \end{equation}
   
  5) Introducing \eqref{SS3} in  \eqref{EC7}, with \eqref{4colphi}, \eqref{422} we
  get
  \begin{equation}\label{GVAL}
    \begin{aligned}
    g & =\left(\begin{array}{ccccc}g_{xx}&g_{xy}&g_{xq}&g_{xp}&g_{x\kappa}\\
                                      &g_{yy}&g_{yq}&g_{yp}&g_{y\kappa}\\
                                      & &g_{qq}&g_{qp}&g_{q\kappa}\\
                                      & &&g_{pp}&g_{p\kappa}\\
               & &&&g_{\kappa\kappa}
               \end{array}\right) \\ ~~&= \left(\begin{array}{ccccc}
  \frac{k^2}{y^2}-\tu\Phi_{xx}&\tu\Phi_{xx}&
 -\frac{\nu kp}{y}-\tu\Phi_{yq}& \frac{\nu kq}{y}-\tu\Phi_{yp}&\frac{k\sqrt{\delta}}{y}\\
                                      &\tu\Phi_{xy}&\tu\Phi_{xq}&\tu\Phi_{xp}&0\\
                                      & &\nu^2p^2-\sg\Phi_{pq}&-\nu^2pq+\sg\Phi_{pp}&-\nu\sqrt{\delta}p\\
                                      & &&\nu^2q^2+\sg\Phi_{qp}&\nu\sqrt{\delta}q\\
               & &&&\delta
                   \end{array}\right),
  \end{aligned}               
\end{equation}
where we have not written down the matrix elements under the
diagonal of the symmetric matrix $g$.

6) But the symmetry of the matrix  $g$
\eqref{GVAL} imposes  the following restrictions on  components of the 
$(1,1)$-tensor $\Phi$:
\begin{subequations}\label{REF}
  \begin{align}
    \Phi_{yy} = &-\Phi_{xx},\\
    \Phi_{qx} = & -\zeta\Phi_{yp},\quad \zeta:=\frac{\tu}{\sg},\\
   \Phi_{qy} = & \zeta\Phi_{xp}, \\
   \Phi_{px} = &  \zeta \Phi_{yq}, \\
    \Phi_{pp} = &- \Phi_{qq}, \\
    \Phi_{py} = &-  \zeta \Phi_{xq}.
  \end{align}
\end{subequations}
We  choose  as  independent components  of tensor field $\Phi$  the 
components $\Phi$ minus the l.h.s. components of \eqref{REF}, i.e. the
submatrix of $\Phi$ with the 
following 10 components 
\begin{equation}\label{phiind}
\left(\begin{array}{ccccc}\Phi_{xx}&\Phi_{xy}&\Phi_{xq}&\Phi_{xp}&\\
                                   \Phi_{yx}   & &\Phi_{yq}&\Phi_{yp}&\\
                                      & &\Phi_{qq}&\Phi_{qp}& \\
                              & &  \Phi_{pq}& &\\
                              & &&& \end{array}\right) .
\end{equation}
With \eqref{REF}, the relations \eqref{4rel} became 
\begin{subequations}\label{4reln}
  \begin{align}
    \Phi_{\kappa x} & =-\frac{1}{\sqrt{\delta}}(\frac{k}{y}\Phi_{xx}+\nu\zeta p\Phi_{yp}
      +\nu \zeta q \Phi_{yq}),\\
     \Phi_{\kappa y} & =-\frac{1}{\sqrt{\delta}}(\frac{k}{y}\Phi_{x
                       y}-\nu \zeta p\Phi_{xp}
                    - \nu\zeta q \Phi_{xq}),\\
   \Phi_{\kappa q } & =-\frac{1}{\sqrt{\delta}}(\frac{k}{y}\Phi_{x
                       q}-\nu p\Phi_{q q}+\nu q \Phi_{pq }),\\
    \Phi_{\kappa p} & =-\frac{1}{\sqrt{\delta}}(\frac{k}{y}\Phi_{xp}-\nu p\Phi_{q
                   p}-\nu q \Phi_{qq }). 
   \end{align}
   \end{subequations}\newcommand{\te}{\ensuremath{\tau}}

   With \eqref{REF}, equation \eqref{GVAL} becomes
\begin{equation}\label{GVAL1}
    g'  = \left(\begin{array}{ccccc}
  \frac{k^2}{y^2}-\tu\Phi_{xx}&-\tu\Phi_{yy}&
 -\frac{\nu kp}{y}-\tu\Phi_{yq}& \frac{\nu kq}{y}-\tu\Phi_{yp}&\frac{k\sqrt{\delta}}{y}\\
                                      &\tu\Phi_{xy}&\tu\Phi_{xq}&\tu\Phi_{xp}&0\\
                                      & &\nu^2p^2-\sg\Phi_{pq}&-\nu^2pq-\sg\Phi_{qq}&-\nu\sqrt{\delta}p\\
                                      & &&\nu^2q^2+\sg\Phi_{qp}&\nu\sqrt{\delta}q\\
               & &&&\delta
\end{array}\right) .
 \end{equation}
Taking into account \eqref{4colphi},   \eqref{4rel}, \eqref{REF}, \eqref{4reln}, the 25 equations \eqref{EC2} for the tensor
 $\Phi$ in the convention \eqref{below} expressed as function of   the 10 independent components
 \eqref{phiind}  are reduced only  to the following 6 independent equations
\begin{subequations}\label{pHIp}
\begin{align}
  & \Phi^2_{xx}+\Phi_{xy}\Phi_{yx}+\zeta(\Phi_{xp}\Phi_{yq}-\Phi_{xq}\Phi_{yp})=-1,\label{612a}\\
  & \Phi_{xq}(\Phi_{xx}+\Phi_{qq})+\Phi_{xy}\Phi_{yq}+\Phi_{xp}\Phi_{pq}=0,\label{612b}\\
  & \Phi_{xp}(\Phi_{xx}-\Phi_{qq})+\Phi_{xy}\Phi_{yp}+\Phi_{xq}\Phi_{qp}=0,\label{612c}\\
  & \Phi_{yq}(\Phi_{qq}-\Phi_{xx})+\Phi_{yx}\Phi_{xq}+\Phi_{yp}\Phi_{pq}=0,\label{612d}\\
  & -\Phi_{yp}(\Phi_{xx}+\Phi_{qq})+\Phi_{yx}\Phi_{xp}+\Phi_{yq}\Phi_{qp}=0,\label{612e}\\
  & \zeta(\Phi_{xp}\Phi_{yq}-\Phi_{yp}\Phi_{xq})+\Phi^2_{qq}+\Phi_{qp}\Phi_{pq}=-1.\label{612f}
\end{align}
\end{subequations}
  In \eqref{pHIp}  the  first     3  (respectively, next 2, last,
  no, no) equations are the only independent equations
of the product of    the first (respectively, second, third, fourth, fifth) line of
$\Phi$ with $\Phi$  in \eqref{EC2}.

 7) The ten independent variables $\Phi$ \eqref{phiind} verify the 6
 independent equations \eqref{pHIp}. We shall choose 6 independent
 variables in the set \eqref{phiind} and express the four remaining
 variables using  six  independent equations \eqref{pHIp}.

 From equations \eqref{612b} and \eqref{612c}, we get  if
 $\Phi_{xq}\Phi_{xp}\not= 0$
 \begin{subequations}\label{QQXX}
   \begin{align}
     \Phi_{xx}+\Phi_{qq} &
                           =-\frac{1}{\Phi_{xq}}(\Phi_{xy}\Phi_{yq}+\Phi_{xp}\Phi_{pq}), \label{QQXX1}\\
     \Phi_{xx}-\Phi_{qq} &
                           =-\frac{1}{\Phi_{xp}}(\Phi_{xy}\Phi_{yp}+\Phi_{xq}\Phi_{qp}). \label{QQXX2} 
   \end{align}
 \end{subequations}
 Introducing \eqref{QQXX2} into  \eqref{612d}, we get if
 $\Phi_{xq}\Phi_{xp}\not= 0$
 \begin{equation}\label{PYX}
   \Phi_{yx}=-\frac{1}{\Phi_{xq}\Phi_{xp}}[\Phi_{yq}(\Phi_{xy}\Phi_{yp}+\Phi_{xq}\Phi_{qp})+\Phi_{yp}\Phi_{pq}\Phi_{xq}].
   \end{equation}
 From \eqref{QQXX}, we get  if
 $\Phi_{xq}\Phi_{xp}\not= 0$
 \begin{subequations}\label{QQXX11}
   \begin{align}
     2\Phi_{xx} & =
                   -\frac{1}{\Phi_{xq}}(\Phi_{xy}\Phi_{yq}+\Phi_{pq}\Phi_{xp})
                   -\frac{1}{\Phi_{xp}}(\Phi_{xy}\Phi_{yp}+\Phi_{xq}\Phi_{qp}),\\
      2\Phi_{qq} & = -\frac{1}{\Phi_{xq}}(\Phi_{xy}\Phi_{yq}+\Phi_{pq}\Phi_{xp}) +\frac{1}{\Phi_{xp}}(\Phi_{xy}\Phi_{yp}+\Phi_{xq}\Phi_{qp}).
     \end{align}
   \end{subequations}
   But from  \eqref{612a}, \eqref{612f} we get
   \begin{equation}\label{615}
     \zeta(\Phi_{xp}\Phi_{yq}-\Phi_{xq}\Phi_{yp})+1=-\Phi^2_{xx}-\Phi_{xy}\Phi_{yx}=-\Phi^2_{qq}-\Phi_{qp}\Phi_{pq}.
   \end{equation}
   Introducing \eqref{PYX} into \eqref{612e}, we get
   \[
     \Phi_{xy}[\Phi_{xq}(\Phi_{yx}\Phi_{xp}+\Phi_{qp}\Phi_{yq})+\Phi_{yp}(\Phi_{xp}\Phi_{pq}+\Phi_{yq}\Phi_{xy})]=0.
     \]
   If $\Phi_{xy}\not=0$, we get
    \begin{equation}\label{PYX1}
   \Phi_{yx}=-\frac{1}{\Phi_{xq}\Phi_{xp}}[\Phi_{yq}(\Phi_{xy}\Phi_{yp}+\Phi_{xq}\Phi_{qp})+\Phi_{yp}\Phi_{pq}\Phi_{xp}].
 \end{equation}
 But comparing \eqref{PYX} with \eqref{PYX1}, we get
 \begin{equation}\label{QU}
   \Phi_{xp}=\Phi_{xq}.
 \end{equation}
 
 With \eqref{QQXX11},  \eqref{QU}, equations \eqref{612a}, 
 \eqref{612f} became
 \begin{subequations}\label{QQXX12}
   \begin{align}
     2\Phi_{xx} & =
                   -\frac{1}{\Phi_{xq}}[\Phi_{xy}(\Phi_{yq}+\Phi_{yp})+\Phi_{xq}(\Phi_{pq}+\Phi_{qp})],\label{1Q2}\\
     2\Phi_{qq} & = \frac{1}{\Phi_{xq}}[\Phi_{xy}(-\Phi_{yq}+\Phi_{yp})+\Phi_{xq}(-\Phi_{pq}+\Phi_{qp})].\label{2Q2}
     \end{align}
   \end{subequations}
 With \eqref{QU}, equations \eqref{615} became
 \begin{subequations}\label{MRR}
   \begin{align}
     \Phi_{xx}^2+\zeta\Phi_{xq}(\Phi_{yq}-\Phi_{yp})+1& =
              -\Phi_{xy}\Phi_{yx},\label{MRR1}\\
      \Phi_{qq}^2 +\zeta\Phi_{xq}(\Phi_{yq}-\Phi_{yp})+1& = -\Phi_{qp}\Phi_{pq}. \label{MRR2}
   \end{align}
 \end{subequations}
 From       \eqref{MRR1} (\eqref{MRR2}) we get with \eqref{1Q2},
 \eqref{PYX1} (\eqref{2Q2})   the value $\Phi_{xy}$  (respectively
 $\Phi_{xq}$).

 We have shown that the six components  $\Phi_{xx},
 \Phi_{xy} ,\Phi_{xq}, \Phi_{xp}, \Phi_{yx}, \Phi_{qq}$  of $\Phi$  in \eqref{phiind}
 can be expressed as function of the remaining independent variables
 $\Phi_{yq},\Phi_{yp},\Phi_{qp},\Phi_{pq}$, while the rest of the
 components of $\Phi$ are obtained with \eqref{REF}.

 Once the (1,1)-tensor $\Phi$ is known, it should be verified that the
 condition \eqref{EC8} is fulfilled. Then    the metric
 tensor $g'$ is   determined with \eqref{GVAL1} and we have to impose the condition that
 the fundamental quadratic form associated to the tensor  $g'$ must  be
 positive definite.

 \end{proof}

\begin{Remark}\label{RR223}
In order to compare the metric matrices \eqref{begGG}  and
\eqref{GVAL1} on ${\tilde{\mc{X}}^J_1}$ we have to make the replacement
$k,\nu\rightarrow \sqrt{k},\sqrt{\nu} $ in \eqref{begGG} due to  the
parametrization \eqref{PARO} of the one-forms in \cite{SB19} and we
get instead of \eqref{begGG} 
\begin{equation}\label{linvG1}
    \begin{split}
 {\rm d} s^2_{\tilde{\mc{X}}^J_1}(x,y,p,q,\kappa)  & =\nu(p\dd
 q^2+q\dd p^2-2pq\dd q\dd p)+\delta \dd
 \kappa^2 + 2\frac{\sqrt{k\delta}}{y}\dd x\dd \kappa \\ & +2\frac{\sqrt{k\nu}}{y}(-p\dd p+q \dd q)\dd x
-2\sqrt{\nu\delta}(p\dd q+
   q\dd p)\dd \kappa
 \end{split}
  \end{equation}
  in the convention \eqref{AK}.
  
  Also it would be interesting to  check that the metric  
  $g_{\tilde{\mc{X}}^J_1}(x,y,p,q,\kappa)$
  \eqref{GVAL1}  can not be obtained from a potential  type
  \eqref{ggold}. 
\end{Remark}

\begin{Proposition}\label{PRR4}
  $(\tilde{\mc{X}}^J_1,\eta_0)$ is a contact Hamiltonian system as
in {\emph{Appendix }}, {\bf C}.

Using the same parametrization  \eqref{cond1} and \eqref{cond2}  in
{\emph {Section \ref{ACHS}}}, we get for $X_H$  \eqref{517} the expression
\begin{equation}\label{XZH}
  \begin{split}
  X_H & =\frac{y^2}{k}\frac{\pa H}{\pa y}\frac{\pa }{\pa x}+
  \frac{1}{2\nu} \frac{\pa H}{\pa p}\frac{\pa}{\pa q}+
  (-\frac{y^2}{k}\frac{\pa H}{\pa x}+\color{green}y\frac{\pa
    H}{\pa \kappa}\color{black})\frac{\pa}{\pa y}\\
 &  -(\frac{1}{2\nu}\frac{\pa H}{\pa q}+\color{green}p\frac{\pa
    H}{\pa \kappa}\color{black})\frac{\pa }{\pa p}
      +\color{green}(-y\frac{\pa H}{\pa y}+p\frac{\pa H}{\pa
    p}-H)\frac{\pa}{\pa \kappa}\color{black}.
    \end{split}
                    \end{equation}
The equations of motion associated to the vector field $X_H$
\eqref{XZH} on the extended Sigel--Jacobi upper half-plane organized
as the contact manifold $(\tilde{\mc{X}}^J_1,\eta_0)$ are 
\begin{subequations}\label{eq2}
  \begin{align}
                      \dot{x} &=\frac{y^2}{k}\frac{\pa H}{\pa y},\quad \dot{y} =  -\frac{y^2}{k}\frac{\pa H}{\pa x}+\color{green}y\frac{\pa
    H}{\pa \kappa}\color{black},\\
                         \dot{q} &=\frac{1}{2\nu} \frac{\pa H}{\pa p}, \quad 
\dot{p} =   -(\frac{1}{2\nu}\frac{\pa H}{\pa q}+\color{green}p\frac{\pa
    H}{\pa \kappa}\color{black}), \\                 
\color{green}\dot{\kappa}   & = \color{green}(-y\frac{\pa H}{\pa y}+p\frac{\pa H}{\pa
    p}-H)\frac{\pa H}{\pa \kappa} \color{black}  . 
  \end{align}
  \end{subequations}
 
The Jacobi bracket \eqref{PAR} on   $\tilde{\mc{X}}^J_1$ is 
\[
  \{f,g\} =\{f,g\}_P+f_e\frac{\pa g}{\pa \kappa}-g_e\frac{\pa f}{\pa \kappa},
\]
where the Poisson bracket \eqref{peste} with the convention
\eqref{cond2} reads  
\[\{f,g\}_P=\frac{1}{k}\frac{y^2+1}{y^2}(\frac{\pa f}{\pa x}\frac{\pa
    g}{\pa y}-\frac{\pa g}{\pa x}\frac{\pa f}{\pa y})+
  \frac{1}{2\nu}[\frac{\pa f}{\pa q}\frac{\pa g}{\pa p} -\frac{\pa
  g}{\pa q}\frac{\pa f}{\pa p} +\frac{1}{y^2}(\frac{\pa f}{\pa
  q}\frac{\pa g}{\pa y}-\frac{\pa g}{\pa q}\frac{\pa f}{\pa y})].
\]
while the Euler operator \eqref{FE} is 
\[f_e=f+\frac{1}{y^3}\frac{\pa f}{\pa y}-p\frac{\pa f}{\pa p}.
\]
\end{Proposition}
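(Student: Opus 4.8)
The plan is to place $(\tilde{\mc{X}}^J_1,\eta_0)$ inside the contact-Hamiltonian formalism of Appendix, {\bf C} (after \cite{MLEO}) and to derive every displayed formula by feeding the concrete dictionary \eqref{cond1}, \eqref{cond2} into the general expressions of that formalism. First I would confirm that $(\tilde{\mc{X}}^J_1,\eta_0)$ really is a contact manifold: this is already contained in Proposition \ref{PPP}, where $\eta_0\wedge\dd\eta_0^2$ was shown to be the nonvanishing volume form of the second line of \eqref{cond}, and where $\dd\eta_0=\omega_{\mc{X}^J_1}$ with $\omega_{\mc{X}^J_1}$ as in \eqref{TT123}. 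I would then compute the Reeb field from $\iota_{\mathcal{R}}\dd\eta_0=0$, $\iota_{\mathcal{R}}\eta_0=1$; because $\dd\eta_0$ has no $\dd\kappa$-component, the first condition kills the $x,y,q,p$-components of $\mathcal{R}$ and the normalization yields $\mathcal{R}=\tfrac1{\sqrt\delta}\pa_\kappa$, in agreement with \eqref{csi}. This identifies the Reeb term $\mathcal{R}(H)$ entering the contact-Hamiltonian equation \eqref{HHX}, \eqref{517}, and establishes the contact-Hamiltonian-system structure.

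For the vector field \eqref{XZH} the key observation is that $\dd\eta_0=\omega_{\mc{X}^J_1}$ is brought to canonical Darboux shape $\dd q^1\wedge\dd p_1+\dd q^2\wedge\dd p_2$ precisely by \eqref{cond2}, $q^1=kx$, $p_1=-1/y$, $q^2=2\nu q$, $p_2=p$, while the coefficients of the contact one-form are matched by \eqref{cond1}. In these variables the general $X_H$ of \eqref{517} takes its standard coordinate form, and the computation reduces to the chain rule. The decisive (nonlinear) relation is $p_1=-1/y$, which gives $\pa/\pa p_1=y^2\,\pa/\pa y$ and $\pa H/\pa p_1=y^2\,\pa H/\pa y$, hence the factors $y^2/k$ in \eqref{XZH}; the linear relations $q^1=kx$, $q^2=2\nu q$, $p_2=p$ produce the $1/k$ and $1/(2\nu)$ factors, and the Reeb contribution $p_i\,\pa H/\pa\kappa$ in \eqref{517} generates the $\pa H/\pa\kappa$-terms. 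Collecting the five components gives \eqref{XZH}, and the equations of motion \eqref{eq2} are then just the integral-curve system $\dot x=X_H^x,\dots,\dot\kappa=X_H^\kappa$ read off componentwise; it is worth contrasting these with the GTACOS equations of Proposition \ref{P16} to isolate the contact-specific terms.

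The last part — the Jacobi bracket \eqref{PAR}, the Poisson bracket \eqref{peste} and the Euler operator \eqref{FE} — is obtained by substituting the same dictionary \eqref{cond2} into the general Appendix-{\bf C} definitions of the Jacobi structure $(\Lambda,\mathcal{R})$ carried by $\eta_0$. The $\Lambda$-part, evaluated on $\dd f,\dd g$, gives $\{f,g\}_P$; the $\mathcal{R}$-part together with the $p_i\,\pa/\pa\kappa$ pieces of $\Lambda$ reorganizes into $f_e\,\pa g/\pa\kappa-g_e\,\pa f/\pa\kappa$, defining the Euler operator $f_e$. The nonlinear change $p_1=-1/y$ is the source of the non-constant weights appearing in \eqref{peste} and \eqref{FE}; one carries the chain rule through every term and separates the $\kappa$-derivative contributions.

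The principal obstacle is not conceptual but a matter of scrupulous bookkeeping and normalization. The contact form $\eta_0$ in \eqref{ETA00} is not literally the canonical Darboux form $\dd\kappa-p_i\dd q^i$ in the variables \eqref{cond2}: it carries the factor $\sqrt\delta$ (so that its Reeb is $\tfrac1{\sqrt\delta}\pa_\kappa$) and the symmetric term $\nu q\,\dd p$. Consequently one must be explicit about which $\kappa$-variable plays the role of the Darboux contact coordinate and about the normalization of $\mathcal{R}$, exactly the kind of sign and normalization adjustment already flagged in Remark \ref{RR1} and in \eqref{HHX}. Keeping this dictionary consistent simultaneously across $X_H$ \eqref{XZH}, the equations of motion \eqref{eq2}, and the three bracket-type operators — so that the $\pa/\pa\kappa$-terms and all prefactors agree — is the delicate step; once the identification is fixed, the remaining manipulations are the elementary chain-rule computations sketched above.
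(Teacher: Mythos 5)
Your plan is, in outline, exactly what the paper does: Proposition \ref{PRR4} carries no separate proof, and its formulas are produced by literally substituting the dictionary \eqref{cond1}, \eqref{cond2} into \eqref{517}, \eqref{peste}, \eqref{FE}, \eqref{PAR}, treating $(q^1,p_1,q^2,p_2,\kappa)$ as Darboux coordinates for $\eta_0$. The genuine gap in your proposal is that the subtlety you correctly flag — and then dismiss as ``bookkeeping and normalization'' — is not bookkeeping: it changes the answer, and your program, carried out faithfully, does \emph{not} land on \eqref{XZH}. In the variables \eqref{cond2} one has $\eta_0=\dd\tilde\kappa-p_1\dd q^1-p_2\dd q^2$ only after the exact term $\nu\dd(pq)$ is absorbed into the genuine Darboux contact coordinate $\tilde\kappa=\sqrt{\delta}\,\kappa+\nu pq$; and since the contact Hamiltonian field depends on $\eta$ itself through the $(X\lrcorner\eta)\eta$ part of \eqref{bemol}, not only on $\dd\eta$ and the Reeb field, this exact term is not innocuous. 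Solving $\flat(X_H)=\dd H-(R(H)+H)\eta_0$ directly with \eqref{ETA00} (one first gets $X_H\lrcorner\eta_0=-H$ from the $\dd\kappa$-component), or equivalently applying \eqref{517} in the coordinates $(q^i,p_i,\tilde\kappa)$ and chain-ruling back, yields
\begin{equation*}
\dot q=\frac{1}{2\nu}\frac{\pa H}{\pa p}-\frac{q}{2\sqrt{\delta}}\frac{\pa H}{\pa\kappa},\qquad
\dot p=-\frac{1}{2\nu}\frac{\pa H}{\pa q}-\frac{p}{2\sqrt{\delta}}\frac{\pa H}{\pa\kappa},\qquad
\sqrt{\delta}\,\dot\kappa=-y\frac{\pa H}{\pa y}+\frac{1}{2}\Big(p\frac{\pa H}{\pa p}+q\frac{\pa H}{\pa q}\Big)-H,
\end{equation*}
which differs from \eqref{XZH}, \eqref{eq2} precisely in the ``green'' terms. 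The displayed \eqref{XZH} is what one obtains from the naive substitution your own caveat forbids — it is the Hamiltonian field of the contact form $\sqrt{\delta}\dd\kappa+\frac{k}{y}\dd x-2\nu p\,\dd q$, which differs from $\eta_0$ by $\nu\dd(pq)$. You needed either to carry the computation through and confront this mismatch, or to state explicitly that you adopt the literal-substitution convention; asserting that ``the remaining manipulations are the elementary chain-rule computations'' leaves the decisive verification undone, and doing it shows the plan cannot be careful in your sense and reproduce the stated formulas at the same time.

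The bracket part of your proposal has the same defect. Running the chain rule as you propose, the nonlinear substitution $p_1=-1/y$ gives $\pa/\pa p_1=y^2\pa/\pa y$, hence
$\{f,g\}_P=\frac{y^2}{k}\big(\frac{\pa f}{\pa x}\frac{\pa g}{\pa y}-\frac{\pa g}{\pa x}\frac{\pa f}{\pa y}\big)+\frac{1}{2\nu}\big(\frac{\pa f}{\pa q}\frac{\pa g}{\pa p}-\frac{\pa g}{\pa q}\frac{\pa f}{\pa p}\big)$
and $f_e=f+y\frac{\pa f}{\pa y}-p\frac{\pa f}{\pa p}$: neither the weight $\frac{1}{k}\frac{y^2+1}{y^2}$, nor the cross terms in $\frac{\pa f}{\pa q}\frac{\pa g}{\pa y}$, nor the factor $\frac{1}{y^3}$ in the proposition's $f_e$ can arise from this chain rule (the $\frac{1}{y^3}$ is what one gets by using $\pa p_1/\pa y=1/y^2$ in place of $\pa y/\pa p_1=y^2$). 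So your closing claim that $p_1=-1/y$ ``is the source of the non-constant weights appearing in \eqref{peste} and \eqref{FE}'' endorses expressions your own method would not produce; had you executed the computation you sketch, you would have discovered the discrepancy rather than certified the statement. In short: the framework, the Reeb field \eqref{csi}, and the diagnosis of the Darboux subtlety are all right, but the proof is missing exactly at the step where it would fail to reproduce the proposition as printed.
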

\begin{Remark}\label{R333}
  If in the  equations of motion expressed in the S-variables $(x,y,p,q,\kappa)$ 
in \eqref{ECXXX}        (respectively \eqref{eq2}) on the generalized transitive almost
cosymplectic manifold  $(\tilde{\mc{X}}^J_1,\theta,\omega)$
(respectively, on the contact manifold $(\tilde{\mc{X}}^J_1,\eta_0)$)
we ignore the ``red'' (respectively  ``green'') parts we get the equations
of motion on $\mc{X}^J_1$ in $(x,y,p,q)$. 
\end{Remark} 
\subsection{Linear Hamiltonian in the generators of the Jacobi  group  $G^J_1(\R)$}\label{43}

In \cite[(4.7)]{FC} we have considered a linear Hermitian Hamiltonian $\mb{H}$
in the generators  of the Jacobi group $G^J_1$
\begin{equation}\label{LH}
  \mb{H}=\epsilon_a\mb{a}+\bar{\epsilon}_a\mb{a}^{\dagger}+\epsilon_0\mb{K}_0+\epsilon_+\mb{K}_+
  +\epsilon_-\mb{K}_-,
  \quad \bar{\epsilon}_+=\epsilon_-,\quad \bar{\epsilon}_0=\epsilon_0.
\end{equation}
We use  the notation introduced in \cite[\S~4.1 3.]{FC}
\[\epsilon_a:=a+\ii b,~\epsilon_+:=m-\ii n, ~\epsilon_0:=2c,\quad
  a,b,c,m,n \in\R.
\]
 We have proved in \cite[(4.29) in \S~4.3]{FC} that the energy function $\mc{H}$ associated to the
 linear Hamiltonian \eqref{LH} expressed in the variables $(\eta, v)$
 splits into  the sum of two independent functions
\begin{equation}\label{hsum}
  \mc{H}(\eta,v)=\mc{H}(\eta)+\mc{H}(v),\quad v=x+\ii y, ~y>0,~\eta=q+\ii p, 
\end{equation}
where
\begin{subequations}\label{sup}
\begin{align}
\mc{H}(q,p) &=\nu[(m+c)q^2+(c-m)p^2+2nqp+2(aq+bp)],\\
\mc{H}(x,y) & =k\{ \frac{1}{y}[(m+c)(x^2+y^2)-2(nx+cy)]+3c-m\}. 
\end{align}
\end{subequations}
We have underlined in  \cite[Remark 1]{SB20}, \cite[Proposition
3]{SB21} the connection of parameter $\eta=q+\ii p$ which appear in the
FC-transform with the S-variables $(p,q)$ which parametrize the Jacobi
group $G^J_1(\R)$.

Now we particularize equations \eqref{ECXXX} to the  linear
Hamiltonian \eqref{hsum}  to which we add a term  ${h}(\kappa)$
\begin{equation}\label{HPARL}
  \mc{H}=\mc{H}(p
 ,q)+\mc{H}(x,y)+ {h}(\kappa),
\end{equation}
and we get
\begin{Proposition}\label{PR6}
  The equations of motion \eqref{ECABC} on the extended
   Siegel-Jacobi upper half-plane  organized as generalized transitive
   almost  cosymplectic
   manifold $(\tilde{\mc{X}}^J_1,\theta,\omega)$ corresponding to the energy
   function \eqref{HPARL} are 
   \begin{subequations}\label{corH}
  \begin{align}
    \dot{x} & = (c+m)(-x^2+y^2)+mx-c+m,\quad
    \dot{y} =   -2(c+m)y^2+2ny,\label{corHxy}\\
    \dot{q} & =\!-\!(m+c)q-\!np-\!a\color{red}
   \!-\!\frac{q}{2\nu}\frac{\pa{h}}{\pa \kappa}\color{black}, \quad
    \dot{p}   = qn +(c-m)p+b\color{red}-\frac{p}{2\nu}\frac{\pa
              h}{\pa \kappa}\color{black},\label{corHpq}\\
   \color{red}\dot{\kappa} & \color{red}=
  (c+m)qa^2+(-c+m)p^2+(m-n)pq+nq+bp\!-\!\frac{1}{\sqrt{\delta}}
                      h.\color{black}
    \end{align}
  \end{subequations}
\end{Proposition}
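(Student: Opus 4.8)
The plan is to specialize the general equations of motion \eqref{ECXXX} from Proposition \ref{P16} to the concrete Hamiltonian \eqref{HPARL}, since the hard conceptual work—deriving the form of $X_H$ and the equations of motion on a generalized transitive almost cosymplectic manifold—has already been carried out in Theorem \ref{main} and transcribed into the variables $(x,y,p,q,\kappa)$ in \eqref{ECXXX}. First I would record that, because $\mc{H}$ in \eqref{HPARL} decomposes as $\mc{H}(p,q)+\mc{H}(x,y)+h(\kappa)$ with the three blocks depending on disjoint sets of variables, all mixed partial derivatives vanish; in particular $\frac{\pa \mc{H}}{\pa \kappa}=h'(\kappa)=\frac{\pa h}{\pa\kappa}$, while $\frac{\pa\mc{H}}{\pa x},\frac{\pa\mc{H}}{\pa y}$ come solely from $\mc{H}(x,y)$ and $\frac{\pa\mc{H}}{\pa p},\frac{\pa\mc{H}}{\pa q}$ come solely from $\mc{H}(p,q)$.

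Next I would substitute these partial derivatives into \eqref{ECXXX}. For the $(x,y)$-pair, using \eqref{sup} one computes $\frac{\pa\mc{H}}{\pa y}$ and $\frac{\pa\mc{H}}{\pa x}$ from $\mc{H}(x,y)=k\{\tfrac1y[(m+c)(x^2+y^2)-2(nx+cy)]+3c-m\}$, and the prefactors $\tfrac{y^2}{k}$ in $\dot x=\tfrac{y^2}{k}\frac{\pa\mc{H}}{\pa y}$ and $\dot y=-\tfrac{y^2}{k}\frac{\pa\mc{H}}{\pa x}$ are designed precisely to cancel the overall $k$ and the $1/y$ and $1/y^2$ factors, leaving the polynomial right-hand sides in \eqref{corHxy}. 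For the $(p,q)$-pair, I would differentiate $\mc{H}(q,p)=\nu[(m+c)q^2+(c-m)p^2+2nqp+2(aq+bp)]$; the factor $\tfrac{1}{2\nu}$ in \eqref{ECXXX} again cancels the $\nu$, and the ``red'' contributions $-q\frac{\pa\mc{H}}{\pa\kappa}$ and $+p\frac{\pa\mc{H}}{\pa\kappa}$ produce the $-\tfrac{q}{2\nu}\frac{\pa h}{\pa\kappa}$ and $-\tfrac{p}{2\nu}\frac{\pa h}{\pa\kappa}$ terms of \eqref{corHpq}. Finally, for $\dot\kappa$ I would insert $p\frac{\pa\mc{H}}{\pa p}+q\frac{\pa\mc{H}}{\pa q}$, evaluated from $\mc{H}(p,q)$, and subtract $\mc{H}$ itself according to the $\dot\kappa$ equation in \eqref{ECXXX}; I expect the $\mc{H}(x,y)$ and $\mc{H}(p,q)$ pieces to recombine into the stated expression, with $h(\kappa)$ surviving through the $-\mc{H}$ term as the $-\tfrac{1}{\sqrt\delta}h$ contribution.

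The computation is entirely routine differentiation and bookkeeping; the only genuine obstacle is algebraic care in matching normalizations, since the passage from the abstract Darboux data to the S-variables involves the substitutions \eqref{cond1}, \eqref{cond2} carrying the factors $k$, $2\nu$, and $\sqrt\delta$, and one must keep these straight so that the prefactors cancel exactly as claimed. I would also double-check the $\dot\kappa$ equation most carefully, since it mixes all three blocks of $\mc{H}$ and is where a stray factor would most easily slip in; in particular the appearance of $\tfrac{1}{\sqrt\delta}$ rather than a bare coefficient should be traced back to the relation $c=\sqrt\delta$ in \eqref{cond1} together with the $C$-coefficient in \eqref{ABC}.
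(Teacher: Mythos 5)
Your plan is exactly the paper's argument: Proposition \ref{PR6} is obtained by nothing more than inserting the partial derivatives of the block-diagonal Hamiltonian \eqref{HPARL}, \eqref{sup} into the equations \eqref{ECXXX} of Proposition \ref{P16}, with the prefactors $y^2/k$, $1/(2\nu)$ and the relation $c=\sqrt{\delta}$ from \eqref{cond1} accounting for the normalizations, and the paper supplies no further detail. Your proposal is therefore correct in approach and essentially identical to the paper's (unwritten) proof; your stated intention to double-check the bookkeeping is well placed, since a literal substitution of \eqref{sup} into \eqref{ECXXX} produces right-hand sides that differ from the printed \eqref{corH} in a few terms (e.g.\ $2nx$ versus $mx-c+m$ in $\dot{x}$ and $xy$ versus $y^2$ in $\dot{y}$), which appear to be typographical slips in the statement rather than defects of the method.
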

\begin{Remark}\label{RR2}

  If in the equations of motion \eqref{corH} on $\tilde{\mc{X}}^J_1$ 
generated by the linear Hamiltonian  \eqref{HPARL}  we ignore
the ``red parts'', the well-known matrix   Riccati equation \eqref{corHxy}  in $(x,y)$ 
and the linear system of differential equations in $(p,q)$ generated by
  the linear Hamiltonian  \eqref{hsum}, \eqref{sup} on $\mc{X}^J_1$
  found
  in \cite{FC} are obtained.
\end{Remark}

\section{Appendix -- a breviar of terminology}\label{APP}

In this section are  collected    definitions of the main
geometric structures  used  in
our  paper. We use the following abbreviations:\\
SH: symplectic Hamiltonian;  ~~~ COS:~ cosymplectic;~~ ACOS:~ almost
cosymplectic,\\~~ GTACOS: ~generalized transitive almost
cosymplectic,~~~TACS:~ transitive almost contact structure,~~~CH:
contact Hamiltonian,~~~C: contact, ~~~ ACM: ~almost contact
metric,~~~SAC: (strict)
almost contact,~~~ N:~ normal,~~~SAS:~ Sasakian,~ACOK: almost coK\"ahler.

\begin{center}
 Table:  Geometric structures on odd dimensional manifolds
\end{center}
\[ 
\boxed{
  \begin{array}{cc cc cc cc cc}
     \text{COS}&\subset&\color{red}\text{GTACOS}\color{black}&\supset&\text{TACS}&&&\\
   &&\cap&&&&&\\
 & &\text{ACOS}& &&&&\\
     &&\cup&&&&&\\
  \text{CH}&\subset&\text{C}&\subset&\text{ACM}&\subset&\text{SAC}&\supset \text{N=SAS}\\
     &&&&\cap&&&\\
   &&&&\text{ACOK}&&&
 \end{array}
 }
\]

{\bf SH} 

In  \cite[Chapter III, Symplectic manifolds and Poisson
manifolds]{paulM}: {\it a symplectic  Hamiltonian system} --  is the pair
 $(M,\Omega)$, where  $\dim M=
2n$, $n\in\N$,  $\Omega$ is a closed non-degenerate two-form, and 
\[\Omega^n\neq 0.\]

If
$H:M\rightarrow \R$ is a Hamiltonian function, then the {\it Hamiltonian
vector field} $X_H$,  $\grad  H$, is the solution of the equation
\begin{equation}\label{GEN}
\flat(X_H)=\dd H, \quad {\text{where~~~}} \flat: TM\rightarrow T^*M,
\quad \flat(X)=X\lrcorner
\Omega.
\end{equation}

In canonical  {\it Darboux coordinates} $(q^i,p_i),~ i=1,\dots,n$ , we have 
\begin{equation}\label{tac2}
  \Omega  = \dd q^i\wedge \dd p_i,
  \end{equation}
  \begin{equation}\label{HXHH}
    X_H=\frac{\pa H}{\pa p_i}\frac{\pa}{\pa q^i}-\frac{\pa
        H}{\pa q^i}
\frac{\pa }{\pa p_i},    \end{equation}
and Hamilton equations of motion are
$$\dot{q}^i=\frac{\pa H}{\pa p_i}=\{q^i,H\}_P,\quad
\dot{p}_i=-\frac{\pa H}{\pa q^i}=\{p_i,H\}_P,$$
where the Poisson bracket $ \{f,g\}_P$  in Darboux  coordinates is 
\begin{equation}\label{peste}
  \{f,g\}_P=\Omega(X_f,X_g)=\frac{\pa f}{\pa q^i}\frac{\pa g}{\pa p_i}-\frac{\pa
    g}{\pa q^i}\frac{\pa f}{\pa p_i},\quad   f,g\in
  C^{\infty} (M).
\end{equation}

Note that if in \eqref{peste} we make the change of coordinates
$(q,p)\rightarrow (p,q)$, then $ \{f,g\}_P\rightarrow - \{f,g\}_P$.

{\bf ACOS} 

In \cite{paul}:      {\it  almost cosymplectic manifold} -- is the
triplet $(M,\theta,\Omega) $,
where $M$ is a $(2n+1)$-dimensional manifold, $\theta\in \got{D}^1$,
$\Omega$ is a   2-form with $\text{rank}(\Omega)=2n$,   and
\begin{equation}\label{thtOM}
  \theta\wedge\Omega^n\neq 0.
\end{equation}

The {\it Reeb vector} $R\in \got{D}^1$ is defined by the equations
\begin{equation}\label{reeb}
R\lrcorner\Omega=0, \quad  
  R\lrcorner\theta=1. 
\end{equation}

 In \cite{Albert}: the almost cosymplectic manifold
$(M,\theta,\Omega) $ of  \cite{paul}  is called {\it almost contact manifold}. \\
It is proved in \cite[Proposition  1]{Albert} that {\it  the application} $\flat: TM\rightarrow  TM^*$
\begin{equation}\label{bemol}
X\rightarrow X^{\flat}=X\lrcorner \Omega + (X\lrcorner\theta)\theta
\end{equation}
{\it is a vector bundle isomorphism}.

{\bf COS}     

In \cite{paul}:   {\it  cosymplectic manifold} -- is the triplet  $(M,\theta,\Omega) $,
defined in  {\bf ACOS},  where

\[
  \dd \theta =0, \quad \dd \Omega=0.
\]

In  \cite{MLEO}: the isomorphism $\flat$ for the  cosymplectic
manifold     $(M,\eta,\Omega)$ 
is defined as in \eqref{bemol}.

The  Darboux  coordinates are $(z,q^i,p_i,)$, $ i=1,\dots,n$, $\Omega$
is defined in \eqref{tac2}, and 
\[
  \eta=\dd z.
  \]
The {\it gradient vector field} $\grad H$, the {\it Hamiltonian vector field}
$X_H$ and the {\it evolution vector field} $\mc{E}_H$ attached to the
function $H$ are defined as  solutions of the  equations:
\begin{subequations}\label{EE}
  \begin{align}
    \flat(\grad H)  & = \dd H,\label{EE1}\\
    X_H &= \grad H -R(H)R,\label{EE2}\\
    \mc{E}_H & = X_H+ R\label{EE3}.  \end{align}
 \end{subequations}

{\bf TACS} 

In \cite{Albert,can}: {\it transitive almost contact structure} - is  an
almost contact manifold $(M,\theta,\Omega) $ defined at  {\bf ACOS} with \[\dd
  \Omega=0,\] and around every point of $M$
there is a   neighbourhood  where there are local Darboux
coordinates $(\kappa, q^1,\dots,q^n,p_1,\dots, p_n)$ such that
\begin{equation}\label{tac1}
 \theta   =\dd \kappa +\epsilon p_i\dd q^i,\quad
           i=1,\dots,n,~~\epsilon\in \R.
 \end{equation}
To a function $f\in C^{\infty}(M)$ it is associated the Hamiltonian vector field $X_f$
defined by \cite[(3)]{Albert} as solution of the
equations 
\begin{subequations}
\begin{align*}
X_f\lrcorner\theta & =\epsilon f,\\
X_f\lrcorner \Omega & = \dd f-(R f)\theta.
\end{align*}
\end{subequations}

{\bf CH}  

In \cite{MLEO,MLEO1}: {\it contact Hamiltonian system}  $(M,\eta)$--
is defined
as the almost cosymplectic manifold $(M,\eta, \dd \eta)$
and \eqref{thtOM} verified 
\begin{equation}\label{thtOM1}
  \eta\wedge\dd \eta^n \neq 0.
\end{equation}

Apparently the denomination {\it  contact structure} for a manifold
$(M, \eta)$ verifying the condition \eqref{thtOM1} was used firstly by
Gray \cite{GRAY}. 

$\theta$ defined in \eqref{tac1}
with $\epsilon=-1$  corresponds to Darboux coordinates and  in accord
with \cite[Theorem,  page 1]{BL} $\eta$ can be taken 
\begin{equation}\label{tace}
\eta =\dd \kappa - p_i\dd q^i,\quad \dd \eta = \dd q^i\wedge
\dd p_i.
\end{equation}

The conditions \eqref{reeb} defining the  Reeb vector became
\begin{equation}\label{reeb1}
  R\lrcorner \dd \eta =0,\quad R\lrcorner\eta =1, 
\end{equation}
where
\begin{equation}\label{reeb3}
  R=\frac{\pa }{\pa \kappa}.\end{equation}

The {\it Hamiltonian vector field} $X_H$ attached to the real function $H$
is defined by \cite[(2.11)]{MLEO}
\begin{equation}\label{HAM}
\flat(X_H)=\dd H-(R\lrcorner H+H)\eta,
\end{equation}
where the Reeb vector $R$ is defined in \eqref{reeb3} and the
application $\flat$ is defined in \eqref{bemol}.

The vector field $X_H$ in Darboux coordinates reads \cite[(2.12)]{MLEO} 
\begin{equation}\label{517}
  X_H=\frac{\pa H}{\pa p_i}\frac{\pa }{\pa q^i}-
  (\frac{\pa H}{\pa q^i}+\color{green}p_i\frac{\pa H}{\pa \kappa})\color{black}\frac{\pa}{\pa p_i}
  +\color{green}(p_i\frac{\pa H}{\pa p_i}-H)\frac{\pa }{\pa \kappa}\color{black}.
\end{equation}
We determine the vector field $\grad H$ solution of equation   \eqref{EE1}
\begin{equation}\label{GRAD2}
  \grad H=\frac{\pa H}{\pa p_i}\frac{\pa}{\pa q^i}-(\frac{\pa H}{\pa
    q^i}+\color{green}p_i\frac{\pa H}{\pa \kappa}\color{black})\frac{\pa }{\pa p_i}+\color{green}(\frac{\pa
    H}{\pa \kappa}+p_i\frac{\pa H}{\pa p_i})R\color{black},\end{equation}
but instead of \eqref{EE2}, we have 
\begin{equation}\label{EEE2}
X_H = \grad H -(H+R(H))R.
 \end{equation} If in equations \eqref{517} and \eqref{GRAD2} we neglect
 the ``green parts'',  we get $X_H$ \eqref{HXHH} on the symplectic
 manifold $(M,\Omega)$.

 In order to recall the notion of {\it Jacobi bracket}
 \cite{KIR,Lich,VAIS},  we follow \cite[\S 4, Contact manifolds as
 Jacobi structures]{MLEO}.

 The isomorphism \eqref{bemol} becomes
 \[
   X\rightarrow X^{\flat}=X\lrcorner \dd\eta +(X\lrcorner\eta)\eta.
\]
 If \begin{equation}\label{519}
   X=A_i\frac{\pa}{\pa q^i}+B_i\frac{\pa}{\pa p^i}+C\frac{\pa}{\pa
     \kappa},\quad\text{then} \quad X^{\flat}=\alpha_i\dd q^i+\beta_i\dd p_i+
   \gamma \dd \kappa,\quad i=1,\dots,n,
 \end{equation}
 where
 \begin{subequations}\label{bemfl}
   \begin{align} & \flat:\quad  \alpha_i=-B_i+p_i(p_jA_j-C),\quad
                   \beta_i=A_i, \quad \gamma=C-p_jA_j,\\
 & \sharp=\flat^{-1}:\quad  A_i=\beta_i,\quad B_i=-\alpha_i-\gamma p_i,\quad C=p_i\beta_i+\gamma.
   \end{align}
 \end{subequations}
 
 We have to calculate  the bracket   $\{f,g\}_J$ \cite[p. 11]{MLEO}
 \begin{equation}\label{JBB}
   \{f,g\}_J=-\dd \eta(\sharp \dd f,\sharp \dd g)-R(g)f+gR(f).\end{equation}
 With \eqref{GRAD2}, \eqref{519}, \eqref{bemfl}, we find
 \begin{Remark}\label{RR3}
   The
 Jacobi bracket \eqref{JBB} has the expression
\begin{equation}\label{JACP}
  \{f,g\}_J=\{g,f\}_P+\frac{\pa f}{\pa \kappa}g_e-\frac{\pa g}{\pa
    \kappa}f_e,
\end{equation}
  where  $f_e$    denotes  Euler's operator \cite{ARN}
\begin{equation}\label{FE}
  f_e:=f-p_i\frac{\pa f}{\pa p_i}.
\end{equation}

\end{Remark}

In \cite[Chapter V, Contact manifolds]{paulM} the conventions 
expressed in \eqref{thtOM1}, \dots, \eqref{reeb3} are used.

Instead of
\eqref{GEN} it is used \cite[Chapter V, Proposition 6.13 page 293]{paulM} 
\begin{equation}\label{GEN1}
  \eta^{\flat}:X\rightarrow -X\lrcorner\dd \eta, 
\end{equation}
which associates to vector fields semi-basic differential forms
\cite[pages 56, 68]{paulM}. Also
it is used the notation \[
  \eta^{\sharp}(\gamma)=^{\sharp}
  \gamma, \quad \gamma\in \got{D}_1.
  \]
Note that in \cite[Lemma p 44]{paul}
instead of \eqref{GEN1}
it is used 
\[
 \eta^{\flat}:X\rightarrow  X\lrcorner\dd \eta. 
\]
According to \cite[Proposition 6.11 p 292]{paulM}:  {\it We have
 decomposition of the tangent space  $TM$ into the direct sum}  
\[
  TM=\text{Ker}\dd \eta \oplus \text{Ker}~ \eta,
\] 
of the vertical bundle of rank 1 generated by the Reeb vector \eqref{reeb3} and
the horizontal bundle of rank $2n$
\[
  X=(X\lrcorner\eta)R +(X-(X\lrcorner\eta)R).
\]
According to \cite[Proposition 13.1 p 318]{paulM}:
{\it The vector field $X$ on the contact manifold $(M,\eta)$ is an
infinitesimal contact automorphism if and only if there exists a
differentiable function $\rho$ such that} \[\mc{L}_X\eta=\rho \eta.\]

According to  \cite[Theorem
13.3 p 319]{paulM}: {\it The choice of a contact form $\eta$ on the
strictly contact manifold} $(M,\eta)$ {\it defines an isomorphism}
$\Phi:~
L\rightarrow L'$
\[\Phi(X)=X\lrcorner \eta,\quad X_f=\Phi^{-1}(f)=f R+\eta^{\sharp}(\dd
  f-(R\lrcorner \dd f)\eta),\quad \rho=R\lrcorner\dd f,\]
where $L$ ($L'$) is the space of the infinitesimal automorphisms of
$\eta$ (respectively differentiable real-valued functions on $M$).

According to \cite[Proposition 14.2 p 325]{paulM}: {\it The Lie algebra structure of $L'$ is defined by the bracket}
\begin{subequations}\label{PAR}
  \begin{align}
    \{f,g\} & =[X_f,X_g]\lrcorner\eta,\\
    ~& = \dd \eta (X_f,X_g)+f (R\lrcorner\dd g)-g (R\lrcorner \dd
       f),\\
    ~& =\{f,g\}_P+f_e\frac{\pa g}{\pa \kappa}-g_e\frac{\pa f}{\pa
       \kappa}.
       \end{align}
     \end{subequations}
   
\begin{Remark}\label{RR4}
    Note that the expresion of the Jacobi bracket
    \eqref{PAR} is minus the expression
    \eqref{JACP} \[\{f,g\}=-\{f,g\}_J,\] and
  coincides with the expresion \cite[ \{f,g\} on p 636]{Albert} for
  $\epsilon =-1$.
\end{Remark}
 
{\bf SAC} 

In \cite{SB19}: following Sasaki \cite{sas} and \cite[Definition 6.2.5]{boga}, we used the 
\begin{deff}\label{D9}
 The manifold  $M_{2n+1}$ has a  {\it (strict)  almost contact structure}
$(\Phi,\xi,\eta)$  if there exists a
 $(1,1)$-tensor field $\Phi$,  the contravariant vector  field $\xi\in\mc{D}^1$
({\it Reeb vector field}, or {\it characteristic vector field}),
and  the       covariant vector field $\eta\in\got{D}_1$ 
 verifying  the relations 
\begin{equation}\label{ACM}
\eta\lrcorner \xi=1, \quad  \Phi^2X  =-X+\eta(X)\xi .
\end{equation}
\end{deff}

{\bf C}

Following \cite{boy},    in \cite{SB19} we used  the definition   of a
{\it 
contact structure}  ($M_{2n+1},\eta)$ when  $\eta\in \mc{D}_1$ satisfies
\eqref{thtOM1}.

The  {\it 
contact structure} can be given by {\it a codimension one subbundle} $\mc{D}$
{\it of the tangent bundle} $TM$ {\it which is as far from being integrable as
possible}, and $\mc{D}:=\text{Ker}(\eta)$.

{\bf ACM}

Sasaki has proved \cite[Theorem 1.1]{sas} and \cite[(5.16)]{sh}:
\begin{Proposition}\label{THM11}
For an almost contact structure $(\Phi,\xi,\eta)$, the following relations hold
\begin{equation}\label{679}
\Phi \xi=0,\quad 
\eta \Phi =0,\quad 
\text{{\emph{Rank~}}}  (\Phi^i_j)  = 2n, \quad  \xi, \eta^t \in
M(n,1,\R). 
\end{equation}
There exists a
positive Riemannian metric $g$ such that
\begin{equation}\label{680}
 \eta=g \xi, \quad \Phi^tg\Phi=g-\eta^t\otimes\eta. 
\end{equation}
\end{Proposition}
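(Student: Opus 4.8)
The statement has two parts: the purely algebraic relations \eqref{679}, which I would deduce from the two defining identities of Definition \ref{D9} alone, and the existence of a compatible metric \eqref{680}, which requires a construction; I would treat them in this order, since the metric uses the algebraic relations. \emph{Algebraic relations.} Writing the second identity of \eqref{ACM} as the operator equation $\Phi^2=-\mathrm{id}+\xi\otimes\eta$, where $(\xi\otimes\eta)(X)=\eta(X)\xi$, I would first note $\Phi^2\xi=-\xi+\eta(\xi)\xi=0$ because $\eta(\xi)=1$. To pass from $\Phi^2\xi=0$ to $\Phi\xi=0$, I compute $\Phi^3$ in the two ways $\Phi\circ\Phi^2$ and $\Phi^2\circ\Phi$; both equal $-\Phi$ plus a rank-one term, and comparing them yields $\eta(X)\,\Phi\xi=\eta(\Phi X)\,\xi$ for every $X$. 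Evaluating at $X=\xi$ gives $\Phi\xi=\eta(\Phi\xi)\,\xi$, so $\Phi\xi=\lambda\xi$ with $\lambda=\eta(\Phi\xi)$; then $0=\Phi^2\xi=\lambda^2\xi$ forces $\lambda=0$, i.e. $\Phi\xi=0$. Feeding $\Phi\xi=0$ back into $\eta(X)\Phi\xi=\eta(\Phi X)\xi$ gives $\eta(\Phi X)\xi=0$, hence $\eta\circ\Phi=0$. Finally $\ker\Phi=\R\xi$ at every point: if $\Phi X=0$ then $0=\Phi^2X=-X+\eta(X)\xi$ forces $X=\eta(X)\xi$, while $\Phi\xi=0$ gives the reverse inclusion; since $\eta(\xi)=1$ makes $\xi$ nonzero, $\ker\Phi$ is one-dimensional and $\mathrm{rank}\,\Phi=(2n+1)-1=2n$. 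The assertions $\xi,\eta^t\in M(2n+1,1,\R)$ are just the matrix reading of $\xi\in\got{D}^1$, $\eta\in\got{D}_1$ in a local frame, in which $\Phi=(\Phi^i_j)$ is a square matrix.

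\emph{Compatible metric.} The plan is to build $g$ adapted to the splitting $TM=\R\xi\oplus\mc{D}$, $\mc{D}:=\ker\eta$, which is a direct sum because $\eta(\xi)=1$. The relations just proved show $\Phi(TM)\subseteq\mc{D}$ (from $\eta\circ\Phi=0$) and $\Phi^2=-\mathrm{id}$ on $\mc{D}$ (from \eqref{ACM} with $\eta(X)=0$), so $\Phi$ restricts to an almost complex structure on the rank-$2n$ distribution $\mc{D}$. Starting from an arbitrary Riemannian metric $h$ on $M$, I would set $g_{\mc{D}}(X,Y):=\tfrac12\big(h(X,Y)+h(\Phi X,\Phi Y)\big)$ on $\mc{D}$; this is positive definite and $\Phi$-invariant precisely because $\Phi^2=-\mathrm{id}$ there. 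Then I define $g(X,Y):=\eta(X)\eta(Y)+g_{\mc{D}}\big(X-\eta(X)\xi,\,Y-\eta(Y)\xi\big)$ on all of $M$. One checks directly that $g$ is positive definite, that $g(\xi,\,\cdot\,)=\eta$ (which is $\eta=g\xi$), and, using $\Phi\xi=0$ together with $\Phi(TM)\subseteq\mc{D}$ so that $\Phi X=\Phi(X-\eta(X)\xi)\in\mc{D}$, that $g(\Phi X,\Phi Y)=g_{\mc{D}}(\Phi X,\Phi Y)=g_{\mc{D}}\big(X-\eta(X)\xi,Y-\eta(Y)\xi\big)=g(X,Y)-\eta(X)\eta(Y)$, the last equality by the $\Phi$-invariance of $g_{\mc{D}}$. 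This is exactly the matrix identity $\Phi^tg\Phi=g-\eta^t\otimes\eta$.

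I expect the main obstacle to be the metric part rather than the algebra: the averaging construction must deliver positive-definiteness and the compatibility identity \emph{simultaneously}, and the only delicate point is that the $\xi$-component and the $\mc{D}$-component of $g$ do not interfere. This is exactly what $\Phi\xi=0$ and $\eta\circ\Phi=0$ guarantee, since they make the projection $X\mapsto X-\eta(X)\xi$ along $\xi$ commute appropriately with $\Phi$; once this is in place, the two relations in \eqref{680} reduce to the defining properties of $g_{\mc{D}}$ on $\mc{D}$ and the normalization $g(\xi,\xi)=1$. The algebraic relations \eqref{679}, by contrast, are a short, purely formal manipulation of $\Phi^2=-\mathrm{id}+\xi\otimes\eta$.
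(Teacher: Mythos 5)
Your proof is correct. Note that the paper itself offers no proof of this proposition --- it is quoted from Sasaki \cite{sas} and Sasaki--Hatakeyama \cite{sh} --- so there is nothing internal to compare against; your argument is essentially the classical one. The algebraic part ($\Phi\xi=0$, $\eta\circ\Phi=0$, $\operatorname{rank}\Phi=2n$ from $\Phi^2=-\mathrm{id}+\xi\otimes\eta$ and $\eta(\xi)=1$, via comparing $\Phi\circ\Phi^2$ with $\Phi^2\circ\Phi$) is exactly Sasaki's. For the metric, Sasaki's original construction first replaces an arbitrary metric $h$ by $h'(X,Y)=h(\Phi^2X,\Phi^2Y)+\eta(X)\eta(Y)$, which already satisfies $h'(\xi,\cdot)=\eta$, and then averages $h'$ over $\Phi$ on all of $TM$; you instead average only on the corank-one distribution $\mc{D}=\operatorname{Ker}\eta$, where $\Phi^2=-\mathrm{id}$, and then extend by declaring $\R\xi\perp\mc{D}$ and $g(\xi,\xi)=1$. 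The two routes are equivalent ($X\mapsto -\Phi^2X=X-\eta(X)\xi$ is precisely the projection onto $\mc{D}$ you use), and your version makes the non-interference of the $\xi$-direction and the $\mc{D}$-direction, which you correctly identify as the only delicate point, explicit from the start. The only cosmetic remark: the statement's ``$\xi,\eta^t\in M(n,1,\R)$'' should of course read $M(2n+1,1,\R)$, as you implicitly assume.
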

If we put 
\begin{equation}\label{gphi}
  \hat{\Phi}:=g\Phi, 
\end{equation}
the two-form $\hat{\Phi}$ is antisymmetric.

Sasaki has proved \cite[Theorem 3.1]{sas}, \cite{sh},  see also \cite[(9.12 ), Theorem 14]{SB19}:
\begin{Proposition}\label{PP19}
  Let $(M,\eta)$ be a contact manifold. Then we can find an almost
  contact metric  structure $(M,\Phi,\xi,\eta,g)$ such that \eqref{gphi} is satisfied,
  \begin{equation}\label{ECSA}
      \dd \eta (X,Y)=g(X,\Phi(Y)),
      \end{equation}
  and 
  \begin{equation}\label{ECSA2}
    \dd \eta\!\!=\!\!\frac{1}{2}\sum_{i,j=1}^{2n+1}\!\hat{\Phi}_{ij}\dd x^i\!\wedge\!\dd x^j\!=\!\sum_{1\leq i <j
      \leq 2n+1}\!\hat{\Phi}_{ij}\dd x^i\!\wedge\!\dd x^j ,\quad 
  \hat{\Phi}_{ij}\!\!=\!\!
  \pa_i\eta_j\!-\!\pa_j\eta_i.\end{equation}
\end{Proposition}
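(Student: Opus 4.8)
The plan is to prove Proposition \ref{PP19} by the classical compatible-almost-complex-structure construction on the contact distribution, followed by verification of the almost contact metric axioms collected in \eqref{ACM}, \eqref{679}, \eqref{680}. First I would exploit the contact condition \eqref{thtOM1}: since $\eta\wedge\dd\eta^n\neq 0$, the restriction of $\dd\eta$ to the codimension-one distribution $\mc{D}:=\operatorname{Ker}(\eta)$ is fiberwise non-degenerate, hence symplectic, and there is a unique Reeb field $\xi\in\got{D}^1$ solving \eqref{reeb1}, namely $\xi\lrcorner\dd\eta=0$ and $\xi\lrcorner\eta=1$. This yields the splitting $TM=\mc{D}\oplus\R\xi$, which is used throughout.

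Next, and this is the main step, I would construct $\Phi$. Choosing an auxiliary Riemannian metric $g_0$ with $\xi\perp_{g_0}\mc{D}$ and $g_0(\xi,\xi)=1$, I define on $\mc{D}$ the bundle map $A$ by $\dd\eta(X,Y)=g_0(AX,Y)$. Then $A$ is $g_0$-skew, so $-A^2=A^tA$ is symmetric and positive-definite on $\mc{D}$; taking its smooth positive square root via the spectral calculus and setting $\Phi|_{\mc{D}}:=A\,(-A^2)^{-1/2}$ (the polar factor) produces a smooth almost complex structure with $\Phi^2=-\mathrm{Id}$ on $\mc{D}$ and $\dd\eta(\Phi X,\Phi Y)=\dd\eta(X,Y)$. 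Extending by $\Phi\xi=0$ gives the $(1,1)$-tensor on $TM$ obeying the second relation in \eqref{ACM}. I expect this to be the hard part: the polar decomposition must be carried out smoothly in the fibers, and the positivity of the compatible form $\dd\eta(\,\cdot\,,\Phi\,\cdot\,)$ on $\mc{D}$, on which every later step depends, rests essentially on the non-degeneracy of $\dd\eta|_{\mc{D}}$ supplied by \eqref{thtOM1}.

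Finally I would define the metric and harvest the identities. Put $g:=\dd\eta(\,\cdot\,,\Phi\,\cdot\,)+\eta\otimes\eta$; by the previous step $g|_{\mc{D}}$ is positive-definite, $g(\xi,\xi)=1$ and $\mc{D}\perp_g\xi$, so $g$ is a positive Riemannian metric. Since $\xi\lrcorner\dd\eta=0$ one checks $g(\xi,\,\cdot\,)=\eta$, i.e. $\eta=g\xi$, and the $\Phi$-invariance $\dd\eta(\Phi\,\cdot\,,\Phi\,\cdot\,)=\dd\eta(\,\cdot\,,\,\cdot\,)$ yields $\Phi^tg\Phi=g-\eta^t\otimes\eta$, so \eqref{680} holds; the relations $\Phi\xi=0$, $\eta\circ\Phi=0$ and $\mathrm{rank}(\Phi^i_j)=2n$ of \eqref{679} are immediate from the splitting, and Definition \ref{D9} is satisfied. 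The identity $g(X,\Phi Y)=\dd\eta(X,Y)$ holds on $\mc{D}$ by construction and trivially when either argument equals $\xi$ (both sides vanish), which is exactly \eqref{ECSA}. Reading \eqref{ECSA} in coordinates with $\hat{\Phi}:=g\Phi$ as in \eqref{gphi} gives $\hat{\Phi}_{ij}=\dd\eta(\pa_i,\pa_j)=\pa_i\eta_j-\pa_j\eta_i$, and the antisymmetry of $\hat{\Phi}$ already noted after \eqref{gphi} then delivers $\dd\eta=\tfrac12\sum\hat{\Phi}_{ij}\,\dd x^i\wedge\dd x^j$, establishing \eqref{ECSA2}.
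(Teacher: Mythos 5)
Your route is the classical Sasaki--Hatakeyama polar-decomposition construction, which is exactly the argument behind the source the paper quotes (the paper itself offers no proof of Proposition \ref{PP19}; it cites Sasaki's Theorem 3.1 and Sasaki--Hatakeyama). The skeleton is sound: the Reeb field from \eqref{reeb1}, the splitting $TM=\mc{D}\oplus\R\xi$ forced by \eqref{thtOM1}, the smooth fiberwise polar factor, and the verifications of \eqref{679} and \eqref{680} are all correct as you set them up. In particular \eqref{680} really does follow from the $\dd\eta$-invariance of $\Phi$, and that identity is quadratic in $\Phi$ --- which is precisely why it fails to detect the one genuine error in your write-up.

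The error is a sign slip in the last step, and it is not cosmetic because \eqref{ECSA} is linear in $\Phi$. With your definitions $\Phi|_{\mc{D}}=A(-A^2)^{-1/2}$ and $g=\dd\eta(\,\cdot\,,\Phi\,\cdot\,)+\eta\otimes\eta$, substituting $Y\mapsto\Phi Y$ into the definition of $g$ gives, for $X,Y\in\mc{D}$,
\begin{equation*}
g(X,\Phi Y)=\dd\eta(X,\Phi^2Y)=-\dd\eta(X,Y),
\end{equation*}
so your structure satisfies the opposite convention $\dd\eta(X,Y)=g(\Phi X,Y)$ rather than \eqref{ECSA}, and correspondingly your coordinate computation would yield $\hat{\Phi}_{ij}=\pa_j\eta_i-\pa_i\eta_j$, the negative of \eqref{ECSA2}. (Both conventions occur in the literature --- the paper itself uses $\dd\eta(X,Y)=g(\Phi(X),Y)$ later, in the Sasakian paragraph --- but the Proposition as stated fixes the first one.) The repair is to take the other polar factor, $\Phi|_{\mc{D}}:=-A(-A^2)^{-1/2}$, while keeping the same metric $g=\dd\eta\bigl(\,\cdot\,,A(-A^2)^{-1/2}\,\cdot\,\bigr)+\eta\otimes\eta$; then $g(X,\Phi Y)=-\dd\eta(X,\Phi^2Y)=\dd\eta(X,Y)$ and \eqref{ECSA}, \eqref{ECSA2} come out with the stated signs, with positivity, \eqref{679} and \eqref{680} unaffected. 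Note that you cannot instead fix the sign by redefining $g:=\dd\eta(\Phi\,\cdot\,,\,\cdot\,)+\eta\otimes\eta$ with your original $\Phi$: that choice satisfies \eqref{ECSA} formally but gives $g(X,X)=-g_0\bigl((-A^2)^{1/2}X,X\bigr)<0$ on $\mc{D}$, destroying the positive-definiteness on which the whole almost contact metric structure rests.
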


{\bf N=SAS}

Following \cite[p. 47]{BL}, let us introduce
\begin{deff}Let $h$ be a $(1,1)$-tensor field. Then the
 {\it  Nijenhuis torsion} $[h,h]$ of $h$ is the tensor field of type $(1,2)$
  given by
 \[   [h,h](X,Y)=h^2[X,Y]+[hX,hY]-h[hX,Y]-h[X,hY],\quad
   X,Y\in\got{D}^1.
 \]
\end{deff}

Let us define the $(1,2)$-tensor
\begin{equation}\label{NN1}N^1:=[\Phi,\Phi]+2\dd \eta\otimes\xi.\end{equation}
According to \cite[Theorem 6.5.9]{boga}
 \begin{Proposition}\label{THM15}
 An almost contact structure $(\xi,\eta,\Phi)$ on $M$
   is normal if and only if $N^1=0$. 
 \end{Proposition}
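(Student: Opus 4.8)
The plan is to realize normality through the integrability of the associated almost complex structure on $M\times\R$ and then to show that the single condition $N^1=0$ already forces that integrability. Concretely, I would introduce on $M\times\R$ the almost complex structure $J(X,f\pa_t)=(\Phi X-f\xi,\eta(X)\pa_t)$ and recall that $(\Phi,\xi,\eta)$ is normal precisely when $J$ is integrable, i.e. when the Nijenhuis tensor $N_J$ of $J$ vanishes. Evaluating $N_J$ on horizontal lifts $(X,0),(Y,0)$ splits it into a horizontal part equal to $N^1(X,Y)$ from \eqref{NN1} and a vertical part $N^2(X,Y)\pa_t$, where $N^2(X,Y)=(\mc{L}_{\Phi X}\eta)(Y)-(\mc{L}_{\Phi Y}\eta)(X)$; evaluating $N_J$ with one $\pa_t$ factor yields the two remaining tensors $N^3(X)=(\mc{L}_\xi\Phi)(X)$ and $N^4(X)=(\mc{L}_\xi\eta)(X)$. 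Thus $N_J=0$ is equivalent to $N^1=N^2=N^3=N^4=0$, and one direction of the asserted equivalence is immediate: normality forces $N^1=0$.

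The substance is the converse implication $N^1=0\Rightarrow N^2=N^3=N^4=0$, which I would establish by contracting $N^1$ against $\xi$ and against $\eta$, using repeatedly the almost contact identities $\Phi\xi=0$, $\eta\circ\Phi=0$ and $\Phi^2X=-X+\eta(X)\xi$ from \eqref{ACM}, together with $\mc{L}_\xi\eta=\xi\lrcorner\dd\eta$ (valid since $\eta(\xi)\equiv1$). First, a direct expansion of the Nijenhuis torsion $[\Phi,\Phi](X,\xi)$ collapses, after the $\Phi\xi$-terms drop and the brackets $[X,\xi],[\xi,X]$ cancel, to $\Phi(\mc{L}_\xi\Phi)(X)$, so that $N^1(X,\xi)=\Phi(\mc{L}_\xi\Phi)(X)+2\dd\eta(X,\xi)\xi$. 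Applying $\eta$ and using $\eta\circ\Phi=0$ gives $\dd\eta(X,\xi)=0$, which is exactly $N^4=0$. Feeding this back leaves $\Phi(\mc{L}_\xi\Phi)(X)=0$; applying $\Phi$ once more and invoking $\Phi^2X=-X+\eta(X)\xi$ reduces the vanishing of $N^3=\mc{L}_\xi\Phi$ to checking $\eta((\mc{L}_\xi\Phi)(X))=0$, and the latter equals $-2\dd\eta(\xi,\Phi X)$, which is zero by the $N^4=0$ just obtained. Hence $N^3=0$.

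For $N^2$ I would first record the identity $N^2(X,Y)=\dd\eta(\Phi X,Y)-\dd\eta(\Phi Y,X)$, valid because $\eta(\Phi X)\equiv0$ forces $\mc{L}_{\Phi X}\eta=(\Phi X)\lrcorner\dd\eta$. Separately, applying $\eta$ to $N^1$ produces the bilinear identity $\eta(N^1(X,Y))=2\dd\eta(X,Y)-2\dd\eta(\Phi X,\Phi Y)$; evaluating it at $(X,\Phi Y)$, expanding $\Phi^2Y$ by \eqref{ACM}, and discarding the term $\dd\eta(\Phi X,\xi)=0$ (again $N^4=0$) yields $\dd\eta(\Phi X,Y)=\dd\eta(\Phi Y,X)$ whenever $N^1=0$. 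Substituting into the recorded identity gives $N^2=0$, completing the chain and therefore the equivalence.

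I expect the main obstacle to be purely computational bookkeeping: the expansions of $[\Phi,\Phi](X,\xi)$ and of $\eta(N^1(X,Y))$ each require careful cancellation of Lie-bracket terms and disciplined use of the three structural identities, and the order of deduction is essential---$N^4=0$ must be secured first, since it is precisely the ingredient that unlocks both $N^3=0$ and $N^2=0$. There is no conceptual difficulty once the auxiliary tensors $N^2,N^3,N^4$ and the almost complex structure $J$ are in place; the only genuine care needed is to keep the sign and contraction conventions for $\dd\eta$ and for $\mc{L}$ consistent throughout.
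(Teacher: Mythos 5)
Your proof is correct, but be aware that the paper contains no proof of Proposition \ref{THM15} to compare against: the statement is quoted directly from \cite[Theorem~6.5.9]{boga}, with normality defined via integrability of the induced almost complex structure, exactly as you set it up. What you have reconstructed is the classical Sasaki--Hatakeyama argument \cite{sh}, also found in \cite[Section~6.1]{BL}: introduce $J$ on $M\times\R$, split its Nijenhuis tensor into the four obstruction tensors $N^1,\dots,N^4$, and show that $N^1=0$ alone kills the remaining three, in the order you correctly insist on ($N^4$ first, since $\xi\lrcorner\dd\eta=0$ is the ingredient that unlocks both $N^3$ and $N^2$). I verified the key steps: $[\Phi,\Phi](X,\xi)=\Phi\bigl((\mc{L}_\xi\Phi)(X)\bigr)$ does follow from $\Phi\xi=0$ and \eqref{ACM}; contracting $N^1(X,\xi)$ with $\eta$ gives $\dd\eta(X,\xi)=0$ and hence $N^4=\mc{L}_\xi\eta=\xi\lrcorner\dd\eta=0$ by Cartan's formula; applying $\Phi$ again and using $\Phi^2=-\operatorname{id}+\eta\otimes\xi$ reduces $N^3=0$ to $\eta\bigl((\mc{L}_\xi\Phi)(X)\bigr)=\eta([\xi,\Phi X])=0$, which is again $\xi\lrcorner\dd\eta=0$; and the identity $\eta(N^1(X,Y))=2\dd\eta(X,Y)-2\dd\eta(\Phi X,\Phi Y)$ evaluated at $(X,\Phi Y)$ yields $\dd\eta(\Phi X,Y)=\dd\eta(\Phi Y,X)$, whence $N^2=0$. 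The only caveat is one you partly anticipated: your numerical factors (e.g. $\eta([\xi,\Phi X])=-2\dd\eta(\xi,\Phi X)$ and the coefficient $2$ in the bilinear identity) presuppose the normalization $2\dd\eta(X,Y)=X\eta(Y)-Y\eta(X)-\eta([X,Y])$, which is the convention matching \eqref{NN1}; with the unnormalized exterior derivative one must instead use \eqref{NN11}, a discrepancy the paper itself flags in the passage between these two equations. This does not affect your logic, since every displayed quantity is only ever shown to vanish, but a careful write-up should fix one convention at the outset.
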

 A normal
contact metric  structure $(M,\xi,\eta,\Phi,g)$ is called a
{\it Sasakian} structure, see  more details \cite[Definitions 6.4.7, 6.5.7, 6.5.13]{boga}. 

Under the conditions \eqref{ACM}, \eqref{680}, but instead of
\eqref{ECSA} and \eqref{NN1} with 
\[
  \dd  \eta (X,Y)=g(\Phi(X), Y),
  \]
  \begin{equation}\label{NN11}N^1:=[\Phi,\Phi]+\dd \eta\otimes\xi,\end{equation}
in  local coordinates $(\kappa, z^1,\dots,z^n)$ on a small neighbourhood
$ \R \times\C^k$ of a Sasaki manifold $(M_{2n+1},\xi,\eta, \Phi,g)$
according with \cite[Theorem 1]{god}, see also \cite{VIS},  we
have 
\begin{subequations}\label{SPOT}
  \begin{align}
    \xi & =\frac{\pa}{\pa \kappa},\\
    \eta & = \dd \kappa +\ii \sum_{j=1}^n(K_{j}\dd z^j-K_{\bar{j}}\dd
           \bar{z}_j),\\
    \dd\eta &= -2\ii \sum_{j,\bar{k}=1}^{n}K_{j\bar{k}}\dd
              z^j\wedge \dd \bar{z}^k,\\
     g & =  \eta\otimes\eta +2\sum_{j,\bar{k}=1}^nK_{j\bar{k}}\dd
            z^j\dd \bar{z}^k,\label{NO4}\\
    \Phi & = -\ii\sum_{j=1}^n(\pa_j-K_j\pa_{\kappa})\otimes \dd z^j+
           \ii\sum_{j=1}^n(\pa_{\bar{j}}-K_{\bar{j}}\pa_{\kappa})\otimes \dd \bar{z}^j,
\end{align}
\end{subequations}
where the {\it Sasaki potential} $K$  does not depend on $\kappa$.

However, {\it using  \eqref{NN1} and \eqref{ECSA}, equation \eqref{NO4}
should be replaced with}
\begin{equation}
  \label{NO44}
  g  =  \eta\otimes\eta-4\sum_{j,\bar{k}=1}^nK_{j\bar{k}}\dd
  z^j\dd \bar{z}^k.
\end{equation}
 In the holonomic cobasis $(\dd y^{\mu})=(\dd \kappa, \dd z^j,\dd
\bar{z}^j)$
the covariant components of the Sasakian metric are \cite[(15)]{god}
\begin{equation}\label{ggold}
  g_{\mu\nu}=\left(\begin{array}{ccc}
 1 &\ii K_j&-\ii K_{\bar{j}}\\
                     \ii K_i& -K_iK_j&
K_{i\bar{j}}+K_iK_{\bar{j}}\\
      -\ii K_{\bar{i}}&
      K_{\bar{i}j}+K_{\bar{i}}K_j&-K_{\bar{i}}K_{\bar{j}}
    \end{array}\right) .
\end{equation}

In the conventions  used in \cite[\S~9.3]{SB19}
for the Heisenberg group $\mr{H}_1$, we make the 
\begin{Remark}\label{RH}
The Sasaki potential for the Heisenberg  group $(\mr{H}_1,\xi,\eta,\Phi,
g_{\mr{H}_1})$ has the
expression
\begin{equation}\label{SP}K_{\mr{H}_1}(z,\bar{z})=\frac{1}{8}(z-\bar{z})^2=-\frac{y^2}{2},\quad
\C\ni  z=x+\ii y, ~x,y\in \R,\end{equation}
corresponding to the left invariant metric
\begin{equation}\label{inm}g_{\mr{H}_1}=\dd x^2+\dd y^2 +\eta^2,\quad \eta=\dd \kappa-y\dd x,\end{equation}
and  \[\Phi= \left(\begin{array}{ccc}0&1&0\\-1& 0&0\\0&y&0\end{array}\right).\]
The \Ka~potential for the manifold $\mr{H}_1/\R\approx \C$
corresponding  to the scalar product of two coherent state vectors
$e_z$ is 
\begin{equation}\label{PH1}
  \mc{K}_{\mr{H}_1} (z,\bar{z}):=(e_z,e_z)=\exp(z\bar{z}).
\end{equation} 
\end{Remark}
\begin{proof}

  We recall   also \cite[Remark 18, Propositions 25, 26] {SB19}.

  We apply \eqref{ggold} for the Heisenberg group  $\mr{H}_1$ with the left
  invariant metric \eqref{inm} \cite[Section 9.3]{SB19}.

  We apply formulas \eqref{SPOT}  with \eqref{NO4}
  replaced with \eqref{NO44} to the Sasaki potential \eqref{SP}.

   Formula \eqref{PH1} is extracted from \cite[(7.75)]{SB19}.
 \end{proof}
{\bf ACOK} 

  In  \cite{Capp}: the notion  of almost cosymplectic manifold 
is adopted as in {\bf ACOS}, the definition of cosymplectic manifold is adopted as in
{\bf COS}, the notion of almost contact structure is introduced as in 
Definition \ref{D9}.

{\it An almost co\Ka~ manifold} -  is an almost contact metric manifold
 $(M,\Phi,\xi,\eta,g)$ such that the two-form $\hat{\Phi}$ \eqref{gphi}
 and $\eta\in\mc{D}_1$ are both closed. If in addition the almost contact structure is normal, $M$
 is called a {\it co\Ka~ manifold}.

According to \cite[Theorem 3.9]{Capp}
\begin{Proposition}
 The almost contact manifold   $(M,\Phi,\xi,\eta,g)$ is
 co\Ka~ if and only if $\nabla^g \Phi=0$ or equivalently $\nabla^g \hat{\Phi}=0$.
\end{Proposition}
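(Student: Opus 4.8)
The plan is to derive both implications from the standard structural identity of almost contact metric geometry, which expresses the covariant derivative $\nabla^g\Phi$ of the structure tensor, $\nabla^g$ being the Levi--Civita connection of $g$, as a universal linear combination of the three obstruction tensors $\dd\hat{\Phi}$, $\dd\eta$ and the normality tensor $N^1$ of \eqref{NN1}. Before invoking it I would dispose of the equivalence of the two parallelism conditions: since $\nabla^g$ is metric, $\nabla^g g=0$, and $\hat{\Phi}=g\Phi$ by \eqref{gphi}, the Leibniz rule gives $(\nabla^g_X\hat{\Phi})(Y,Z)=g\big((\nabla^g_X\Phi)Y,Z\big)$ for all $X,Y,Z\in\got{D}^1$; as $g$ is non-degenerate this yields $\nabla^g\hat{\Phi}=0\Leftrightarrow\nabla^g\Phi=0$, so it suffices to characterise $\nabla^g\Phi=0$.

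For the forward implication I would use the structural identity (as in Blair's monograph), which in the present conventions reads schematically
\[
  2g\big((\nabla^g_X\Phi)Y,Z\big)=\dd\hat{\Phi}(X,\Phi Y,\Phi Z)-\dd\hat{\Phi}(X,Y,Z)+g\big(N^1(Y,Z),\Phi X\big)+(\text{terms linear in }\dd\eta\text{ and }\eta).
\]
When $(M,\Phi,\xi,\eta,g)$ is co\Ka, that is $\dd\hat{\Phi}=0$, $\dd\eta=0$, and, by normality, $N^1=0$, every term on the right-hand side vanishes identically, whence $\nabla^g\Phi=0$.

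For the converse I would show that $\nabla^g\Phi=0$ forces each of the three defining tensors to vanish. First, a parallel form is closed (the exterior derivative being the antisymmetrisation of $\nabla^g$ for the torsion-free Levi--Civita connection), so $\dd\hat{\Phi}=0$ is immediate. Next I establish $\nabla^g\xi=0$: differentiating $\Phi\xi=0$ gives $(\nabla^g_X\Phi)\xi+\Phi(\nabla^g_X\xi)=0$, hence $\Phi(\nabla^g_X\xi)=0$, so $\nabla^g_X\xi\in\operatorname{Ker}\Phi=\langle\xi\rangle$ by \eqref{679}; differentiating $g(\xi,\xi)=1$ gives $g(\nabla^g_X\xi,\xi)=0$, and the two together force $\nabla^g_X\xi=0$. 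Since $\eta=g\xi$ by \eqref{EC6}, \eqref{680}, this yields $\nabla^g\eta=0$ and hence $\dd\eta=0$. Finally, for a torsion-free connection a direct expansion of the Nijenhuis torsion $[\Phi,\Phi]$ shows it is a pointwise expression in $\nabla^g\Phi$ alone, so $\nabla^g\Phi=0$ gives $[\Phi,\Phi]=0$; combined with $\dd\eta=0$ this is exactly $N^1=[\Phi,\Phi]+2\dd\eta\otimes\xi=0$, i.e. normality. Thus $\nabla^g\Phi=0$ implies the co\Ka~ conditions, closing the equivalence.

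The main obstacle is the bookkeeping of conventions rather than the geometry: the paper carries two sign normalisations for the normality tensor, \eqref{NN1} versus \eqref{NN11}, and two for $\hat{\Phi}$ and $\dd\eta$ through \eqref{ECSA}, so the universal identity for $\nabla^g\Phi$ must be transcribed with signs matching \eqref{gphi}, \eqref{680} and \eqref{NN1}, otherwise the cancellation in the forward direction is obscured. I would therefore fix conventions once, using $\hat{\Phi}=g\Phi$ and $N^1$ as in \eqref{NN1}, and verify the three-term identity in a local $\Phi$-adapted frame in which $\Phi$ restricts to a standard complex structure on $\operatorname{Ker}\eta$ and annihilates $\xi$; in such a frame all contractions reduce to the Hermitian model and the identity becomes a routine, if lengthy, verification.
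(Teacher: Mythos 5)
Your argument is mathematically sound, but there is nothing in the paper to compare it against: the statement appears in the terminology appendix (at {\bf ACOK}) as a quotation of \cite[Theorem~3.9]{Capp}, and the author gives no proof of it. What you reconstruct is the standard proof from the literature, and each step checks out: the equivalence $\nabla^g\Phi=0\Leftrightarrow\nabla^g\hat{\Phi}=0$ from $\nabla^g g=0$ and the non-degeneracy of $g$; the forward implication from Blair's structural identity for $2g\big((\nabla^g_X\Phi)Y,Z\big)$, every right-hand term of which is built from $\dd\hat{\Phi}$, $\dd\eta$ and $N^1$ and hence vanishes for a co\Ka~ structure; and the converse via $\dd\hat{\Phi}=0$ (a parallel form is closed for the torsion-free connection), $\nabla^g\xi=0$ (using $\operatorname{Ker}\Phi=\langle\xi\rangle$, which follows from $\operatorname{rank}\Phi=2n$ in \eqref{679}, together with $g(\xi,\xi)=1$), hence $\dd\eta=0$, and the pointwise expression of $[\Phi,\Phi]$ through $\nabla^g\Phi$, which gives $N^1=0$. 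Your caution about normalisations is warranted, since the paper carries both \eqref{NN1} and \eqref{NN11} and two conventions for $\hat{\Phi}$ versus $\dd\eta$; but because every obstruction term vanishes identically in the forward direction and the converse never invokes the structural identity, no choice of sign affects the validity of the argument.
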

 The same definition of co-symplectic/co-\Ka~ manifold
used by \cite{Capp} is adopted in the review paper  \cite{LI}.

Apparently the denomination of  co-\Ka~ manifold was used firstly in
\cite{BEJS}.

In  \cite{BL2}: Blair  used the term cosymplectic manifold denoting  a cosymplectic
manifold in the sense of Libermann \cite{paul} endowed with a compatible almost
contact metric structure as in  {\bf N=SAS}  satisfying the normality condition. i.e. the
co\Ka~ manifold of \cite{Capp}. 

\subsection*{Acknowledgements}
This research was conducted in the framework of the
ANCS project program PN 19 06 01 01/2019. I am grateful to 
Mihai ~Visinescu for references on cosymplectic structures and the remarks
on the  drafts of the paper. I would like to thank Mrs. Popescu Elena
from IMAR for facilitating my access to several  references.

\end{document}